\renewcommand*\env@matrix[1][\arraystretch]{%
  \edef\arraystretch{#1}%
  \hskip -\arraycolsep
  \let\@ifnextchar\new@ifnextchar
  \array{*\c@MaxMatrixCols c}}
\newcommand{\comment}[1]{}
\newcommand{\commentYZ}[1]{}
\newcommand{\longcomment}[1]{}
\renewcommand{\longcomment}[1]{\ovalbox{\begin{minipage}{.9\textwidth}\color{blue}#1\end{minipage}}}
\renewcommand{\comment}[1]{{\color{magenta}\ovalbox{#1}}}
\newcommand{\ind}{{\bf 1}}
\def\inddd#1{{\ind}_{\left\{#1\right\}}}
\newcommand{\proba}{\mathbb P}
\newcommand{\inv}{^{-1}}
\newcommand{\eqnh}{\begin{eqnarray*}}
\newcommand{\eqne}{\end{eqnarray*}}
\newcommand{\eqnhn}{\begin{eqnarray}}
\newcommand{\eqnen}{\end{eqnarray}}
\newcommand{\equh}{\begin{equation}}
\newcommand{\eque}{\end{equation}}
\def\summ#1#2#3{\sum_{#1 = #2}^{#3}}
\def\prodd#1#2#3{\prod_{#1 = #2}^{#3}}
\def\sif#1#2{\sum_{#1=#2}^\infty}
\def\topp#1{^{(#1)}}
\def\abs#1{\left|#1\right|}
\def\ccbb#1{\left\{#1\right\}}
\def\sccbb#1{\{#1\}}
\def\pp#1{\left(#1\right)}
\def\bb#1{\left[#1\right]}
\def\mmid{\;\middle\vert\;}
\def\floor#1{\left\lfloor #1 \right\rfloor}
\def\vv#1{{\boldsymbol #1}}
\def\vvalpha{{\vv\alpha}}
\def\vvbeta{{\vv\beta}}
\def\qmand{\quad\mbox{ and }\quad}
\def\qmwith{\quad\mbox{ with }\quad}
\def\mfa{\mbox{ for all }}
\def\wt#1{\widetilde{#1}}
\def\weakto{\Rightarrow}
\def\Z{{\mathbb Z}}
\renewcommand{\ind}{{\bf 1}}
\def\inddd#1{{\ind}_{\left\{#1\right\}}}
\newcommand{\hidecomment}[1]{}
\theoremstyle{plain}
\newtheorem{theorem}{Theorem}[section]
\newtheorem{corollary}[theorem]{Corollary}
\newtheorem{lemma}[theorem]{Lemma}
\newtheorem{proposition}[theorem]{Proposition}
\theoremstyle{remark}
\newtheorem{remark}[theorem]{Remark}
\newcommand{\calM}{\mathcal{M}}
\def\<{\langle}
\def\>{\rangle}
\newcommand{\RR}{\mathds{R}}
\newcommand{\CC}{\mathds{C}}
\newcommand{\NN}{\mathds{N}}
\newcommand{\ZZ}{\mathds{Z}}
\newcommand{\eps}{\varepsilon}
\newcommand{\la}{\lambda}
\newcommand{\EE}{\mathds{E}}
\renewcommand{\Pr}{\mathds{P}}
\def\<{\langle}
\def\>{\rangle}
\newcommand{\mM}{{\vv M}}
\numberwithin{equation}{section}
\title[Random Motzkin paths near boundary]{Limit theorems for random Motzkin paths near boundary}
\author{W\l odzimierz Bryc}
\address
{
W\l odzimierz Bryc\\
Department of Mathematical Sciences\\
University of Cincinnati\\
2815 Commons Way\\
Cincinnati, OH, 45221-0025, USA.
}
\email{wlodzimierz.bryc@uc.edu}
\author{Yizao Wang}
\address
{
Yizao Wang\\
Department of Mathematical Sciences\\
University of Cincinnati\\
2815 Commons Way\\
Cincinnati, OH, 45221-0025, USA.
}
\email{yizao.wang@uc.edu}
\keywords{discrete Bessel process; matrix ansatz;
Motzkin paths;
random walk conditioned to stay positive;Viennot's formula}
\subjclass[2010]
{60C05; 
60J10; 
}
\begin{document}\sloppy

\begin{abstract}
We consider Motzkin paths of length $L$, not fixed at zero at both end points, with constant weights on the edges and general weights on the end points. We investigate, as the length $L$ tends to infinity,  the limit behaviors of (a) boundary measures induced by the weights on both end points and (b) the segments of the sampled Motzkin path viewed as a process starting from each of the two end points, referred to as boundary processes.
 Our first result concerns the case when the induced boundary measures have finite first moments. Our second result concerns when the boundary measure on the right end point is a generalized geometric measure with parameter $\rho_1\ge 1$, so that this is an infinite measure and yet it induces a probability measure for random Motzkin path when $\rho_1$ is
 not too large. The two cases under investigation reveal a phase transition. In particular, we show that the limit left boundary processes in the two cases
 have the same transition probabilities as random walks conditioned to stay non-negative.

\end{abstract}
\maketitle
 
\section{Introduction}

\subsection{Weighted Motzkin paths without fixed boundary points}%

 A Motzkin path of length $L\in \ZZ_{\geq 1}$ is a sequence
of lattice points $(\vv x_0,\dots, \vv x_L)$ in  $\ZZ_{\geq 0}\times  \ZZ_{\geq 0}$, such that $\vv x_j = (j, n_j)$ with $ |n_{j}-n_{j-1}| \leq  1$. An edge $(\vv x_{j-1},\vv x_j)$  is called an up step  if
$n_{j}-n_{j-1}=1$, a down step  if $n_{j}-n_{j-1}=-1$ and a horizontal step if $n_{j}-n_{j-1}=0$.
Each such path can be identified with a  sequence of non-negative integers that specify the starting point $(0,n_0)$ and consecutive   values $n_j$  along the vertical axis at step $j\geq 1$. We shall write  $\vv\gamma=(\gamma_0,\gamma_1,\dots,\gamma_L)$ with $\gamma_j = n_j$  for such a sequence and refer to $\vv\gamma$ as a Motzkin path.   By $\calM_{i,j}\topp L$ we denote the family of all Motzkin paths of length $L$ with the initial altitude $\gamma_0=i$ and the final altitude $\gamma_L=j$. We also refer to $\gamma_0$ and $\gamma_L$ as the boundary/end points of the path. Here, we follow the standard terminology; see  \citet[Definition V.4, page 319]{flajolet09analytic}
 or \cite{Viennot-1984a}.

To introduce random Motzkin paths, we start by assigning  weights to the edges and to the end-points of a Motzkin path. In general,  weights for the edges arise from three sequences $\vv a=(a_j)_{j\geq 0}$, $\vv b=(b_j)_{j\geq 0}$, $\vv c=(c_j)_{j\geq 0}$ of positive real numbers.
For a path $\vv\gamma=(\gamma_0=i,\gamma_1,\dots,\gamma_{L-1},\gamma_L=j)\in\calM_{i,j}\topp L$ we define its (edge) weight
\begin{equation}\label{w}
  w(\vv\gamma)=\prod_{k=1}^L a_{\gamma_{k-1}}^{\eps_k^+} b_{\gamma_{k-1}}^{\eps_k^0}c_{\gamma_{k-1}}^{\eps_k^-},
\end{equation}
where %
\[%
   \eps_k^+(\vv\gamma):=\ind_{\gamma_{k}>\gamma_{k-1}}, \quad\; \eps_k^-(\vv\gamma):=\ind_{\gamma_{k}<\gamma_{k-1}}, \quad \; \eps_k^0(\vv\gamma):=\ind_{\gamma_{k}=\gamma_{k-1}}, \quad k=1,\dots, L.
\]%
That is, the edge weight is multiplicative in the edges, we take $\vv a$, $\vv b$ and $\vv c$ as the weights of the up steps, horizontal steps and down steps, and the weight of a step depends  on the altitude of the left-end of an edge.
Note that the value of $c_0$ does not contribute to $w(\vv\gamma)$.
Since $\calM_{i,j}\topp L$ is a finite set, the normalization constants
\begin{equation}\label{Wij}
  \mathfrak W
  \topp L_{m,n}=\sum_{\vv\gamma\in\calM_{m,n}\topp L} w(\vv\gamma)
\end{equation}
are well defined for all $m,n\geq 0$.

  \begin{figure}[t]
  \begin{tikzpicture}[scale=1]

 \draw[->] (0,0) to (0,3);

 \draw[->] (0,0) to (11,0);
\draw[-,thick] (0,2) to (1,1);
\draw [fill] (0,2) circle [radius=0.05];
 \node[above] at (.6,1.6) {\footnotesize  $c_2$};
\draw[-,thick] (1,1) to (2,1);
\draw [fill] (1,1) circle [radius=0.05];
 \node[above] at (1.6,1) {\footnotesize  $b_1$};
\draw[-,thick] (2,1) to (3,0);
\draw [fill] (2,1) circle [radius=0.05];
 \node[above] at (2.6,0.6) {\footnotesize  $c_1$};
\draw[-,thick] (3,0) to (4,1);
\draw [fill] (3,0) circle [radius=0.05];
 \node[above] at (3.4,0.6) {\footnotesize  $a_0$};
\draw[-,thick] (4,1) to (5,1);
\draw [fill] (4,1) circle [radius=0.05];
 \node[above] at (4.5,1) {\footnotesize  $b_1$};
\draw[-,thick] (5,1) to (6,2);
\draw [fill] (5,1) circle [radius=0.05];
 \node[above] at (5.5,1.6) {\footnotesize  $a_1$};
\draw[-,thick] (6,2) to (7,1);
\draw [fill] (6,2) circle [radius=0.05];
 \node[above] at (6.5,1.6) {\footnotesize  $c_2$};
\draw[-,thick] (7,1) to (8,0);
\draw [fill] (7,1) circle [radius=0.05];
 \node[above] at (7.5,0.6) {\footnotesize  $c_1$};
\draw[-,thick] (8,0) to (9,1);
\draw [fill] (8,0) circle [radius=0.05];
 \node[above] at (8.4,0.6) {\footnotesize  $a_0$};
 \draw [fill] (9,1) circle [radius=0.05];

   \node[below] at (1,0) {  $1$};
      \node[below] at (2,0) {  $2$};
       \node[below] at (3,0) {  $3$};
        \node[below] at (4,0) {  $4$};
         \node[below] at (5,0) {  $5$};
          \node[below] at (6,0) {  $6$};
           \node[below] at (7,0) {  $7$};
            \node[below] at (8,0) {  $8$};
             \node[below] at (9,0) {  $9$};
\end{tikzpicture}
\caption{\label{Fig1}    Motzkin path $\vv\gamma=(2,1,1,0,1,1,2,1,0,1)\in\calM\topp 9$     with weight contributions marked at the edges. The probability of selecting the path shown above from $\calM\topp 9$ is
$\Pr_9(\vv\gamma)=\frac{\alpha_2 \beta_1}{\mathfrak C_{\vv \alpha,\vv \beta, 9} }  b_1^2 a_0^2 a_1 c_1^2  c_2^2$.
}
\end{figure}
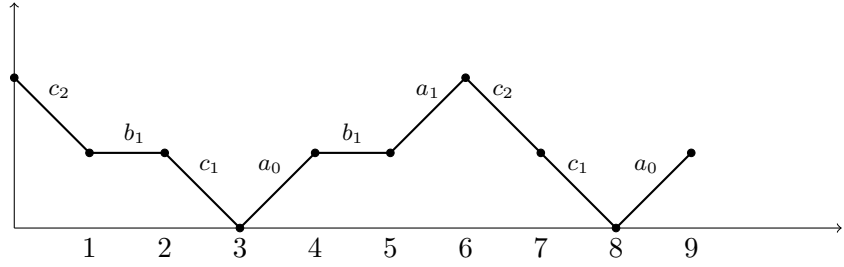

In addition to the weights of the edges, we also weight the two end points.
To this end we choose two additional non-negative sequences $\vv \alpha=(\alpha_m)_{m\ge 0}$ and $\vv \beta=(\beta_n)_{n\ge 0}$ such that
\begin{equation}
   \label{CL} \mathfrak{C}_{\vv\alpha,\vv\beta,L}:=\sum_{m,n\geq 0} \alpha_m \mathfrak W\topp L_{m,n}\beta_n<\infty.
\end{equation}
In our main results we shall assume %
\equh\label{eq:abc}
\vv a=(1,1,\dots), \quad \vv b=(\sigma,\sigma,\dots), \quad \vv c =(1,1,\dots).
\eque
Then,
normalization constants
$\{\mathfrak W\topp L_{m,n}\}_{m,n\geq 0}$ are also bounded, as  $\mathfrak W\topp L_{m,n}=0$ for $|m-n|>L$. So %
$\mathfrak C_{\vv\alpha,\vv\beta,L}<\infty$  if for example  $\sif n0 \alpha_n<\infty$ and $\sif n0 \beta_n<\infty$.

With finite normalizing constant \eqref{CL}, the countable set  $\calM\topp L=\bigcup_{m,n\geq 0} \calM_{m,n}\topp L$ becomes a probability space with the discrete probability measure
\[%
\Pr_L(\vv\gamma)\equiv  \Pr_{\vvalpha,\vvbeta,L}(\{\vv\gamma\})=\frac{\alpha_{\gamma_0}\beta_{\gamma_L}}{\mathfrak{C}_{\vvalpha,\vvbeta,L}} w(\vv\gamma), \quad \vv\gamma\in\calM\topp L.
\]%
For an illustration, see Figure \ref{Fig1}.
We now consider a random vector $(\gamma_0\topp L,\dots,\gamma_L\topp L)\in\calM\topp L$ sampled from $\Pr_L$, and extend it to an infinite process denoted by
\[
\vv\gamma\topp L:= \ccbb{\gamma_k\topp L}_{k\ge 0}, \qmwith \gamma_k\topp L := 0 \mbox{ if } k>L.
\]
Similarly, we also introduce the infinite reversed process by %
\[
\wt{\vv\gamma}\topp L:=\ccbb{\wt\gamma_k\topp L}_{k\ge 0} \qmwith \wt\gamma_k\topp L :=
\begin{cases}\gamma_{L-k}\topp L & \mbox{ if } k=0,\dots,L\\0 & \mbox{ if } k>L.\end{cases}
\]
Note that  two sequences $\vv\alpha,\vv\beta$ induce    positive, possibly $\sigma$-finite, measures on $\ZZ_{\ge 0}$,
and we refer to both as the {\em boundary measures}. We are interested in the limit of the boundary measures as $L\to\infty$. It turns out that this can be done along with computing the limit processes for both $\vv\gamma\topp L$ and $\wt{\vv\gamma}\topp L$. We refer to the two processes as %
the
(left and right)
{\em boundary processes}.

Our contributions are limit theorems for the boundary processes $\vv\gamma\topp L$ and $\wt{\vv\gamma}\topp L$. Our limit theorems concern two cases. The first is when the boundary measures are both finite and have finite first moments. In the second one, we
consider
{\em general} geometric weights at the right end point with parameter $\rho_1$ which we allow to be larger than one.
 So in this latter case the right boundary measure may become  infinite and  %
 we reveal a phase transition when parameter %
 $\rho_1$ crosses 1. In the limit, the left boundary processes have the same transition probabilities as the  random walks conditioned to stay non-negative \citep{bertoin1994conditioning}.

\subsection{First result: when the boundary measures have finite first moments}
Consider the following transition probabilities for a Markov chain on $\ZZ_{\ge 0}$  from $n$ to $m$:
\begin{equation}\label{M-trans}
\mathsf Q_{n,m}:=\begin{cases}
  \displaystyle
\frac{1}{2+\sigma}  \frac{n+2}{n+1} & \mbox{ if } m=n+1\\\\
  \displaystyle    \frac{\sigma}{2+\sigma}& \mbox{ if } m=n \\\\
  \displaystyle      \frac{1}{2+\sigma}\frac{n}{n+1} & \mbox{ if } m=n-1\ge 0 \\\\
  \displaystyle     0 & \mbox{otherwise}
.  \end{cases}%
\end{equation}
With $\sigma=0$, this corresponds to the discrete 3-dimensional Bessel process introduced in \citet{pitman1975one}.
As one of the reviewers pointed out, in general these are transition probabilities of a $\sigma$-lazy random walk started at $0$ and conditioned on staying positive.
\begin{theorem}
  \label{Thm-uni}
   Assume \eqref{eq:abc} with $\sigma>0$ and
   \equh\label{eq:integrable}
   \sif n0n\alpha_n<\infty \qmand \sif n0 n\beta_n<\infty.
   \eque  Then %
   \[
 \pp{\ccbb{\gamma_k\topp L}_{k\ge 0},\ccbb{\wt\gamma_k\topp L}_{k\ge 0}}\weakto \pp{\ccbb{X_k}_{k\ge 0},\ccbb{X_k'}_{k\ge 0}}
 \quad \mbox{ as $L\to\infty$},
\]
 where the right-hand side are two independent Markov chains with %
 the same %
  transition probabilities $\{\mathsf Q_{n,m}\}_{n,m\ge 0}$ in \eqref{M-trans}, and %
  different
   initial laws of $X_0$ and $X'_0$ respectively as %
\begin{equation}\label{ini-law}
  \Pr(X_0=n)=\frac{1}{C_{\vv\alpha}}(n+1)\alpha_n, \quad \Pr(X'_0=n)=\frac{1}{C_{\vv\beta}}(n+1)\beta_n, \quad n\ge 0,
\end{equation}
with the normalization constants
 \[%
    C_{\vv\alpha}=\sum_{m=0}^\infty(m+1)\alpha_m ,\quad C_{\vv\beta}=\sum_{m=0}^\infty(m+1)\beta_m.
\]%
\end{theorem}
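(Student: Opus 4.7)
My plan is to prove convergence of finite-dimensional distributions, which suffices here since the target is a Markov chain on the discrete state space $\ZZ_{\ge 0}^\NN\times\ZZ_{\ge 0}^\NN$. Thanks to the multiplicative structure of the edge weight \eqref{w}, for fixed $k_1,k_2\ge 0$ and non-negative integers $n_0,\dots,n_{k_1}$ and $m_0,\dots,m_{k_2}$ the joint probability that the left segment equals $(n_0,\dots,n_{k_1})$ and the right segment (in appropriate orientation) equals $(m_{k_2},\dots,m_0)$ factors as
\equh
\frac{\alpha_{n_0}\,W_{\textrm{left}}\,\mathfrak W\topp{L-k_1-k_2}_{n_{k_1},m_{k_2}}\,W_{\textrm{right}}\,\beta_{m_0}}{\mathfrak C_{\vvalpha,\vvbeta,L}},
\eque
where $W_{\textrm{left}},W_{\textrm{right}}$ are the products of edge weights along the two fixed end-segments. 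Thus the entire problem reduces to a large-$L$ analysis of $\mathfrak W\topp L_{m,n}$ and $\mathfrak C_{\vvalpha,\vvbeta,L}$.

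The central technical statement is that, for each fixed pair $(m,n)$,
\equh
\mathfrak W\topp L_{m,n}=c_\sigma(m+1)(n+1)(\sigma+2)^L L^{-3/2}\spp{1+o(1)},
\eque
together with a uniform upper bound $\mathfrak W\topp L_{m,n}\le C(m+1)(n+1)(\sigma+2)^L L^{-3/2}$ (with a Gaussian tail factor when $|m-n|$ is large) valid for all $L\ge 1$ and all $m,n$. I would obtain this by conditioning on the number $h$ of horizontal steps; the remaining $L-h$ steps form an up/down lattice walk from $m$ to $n$ constrained to stay non-negative, whose count is given by the classical reflection principle. A local central limit theorem for the latter, combined with Laplace's method for the binomial sum over $h$, yields the stated asymptotic with $\sigma+2$ emerging as the top of the spectrum of the constant tridiagonal transfer matrix and $(n+1)$ as the right eigen-object at the spectral edge. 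Given this bound, dominated convergence---with dominating function $(m+1)(n+1)\alpha_m\beta_n$, which is summable precisely under hypothesis \eqref{eq:integrable}---gives $\mathfrak C_{\vvalpha,\vvbeta,L}\sim c_\sigma C_{\vvalpha}C_{\vvbeta}(\sigma+2)^L L^{-3/2}$.

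Substituting both asymptotics into the joint-probability formula, the $L$-dependent factors $(\sigma+2)^L L^{-3/2}$ cancel, and the factor $(n_{k_1}+1)(m_{k_2}+1)$ coming from $\mathfrak W\topp{L-k_1-k_2}$ separates the two endpoints cleanly. The limit is then identified via the identity
\equh
\mathsf P_{n,m}=\frac{m+1}{n+1}\cdot\frac{w(n\to m)}{\sigma+2},
\eque
which is immediate from \eqref{M-trans} and which makes the product $\prod_{j=1}^{k_1}\mathsf P_{n_{j-1},n_j}$ telescope to $\frac{n_{k_1}+1}{n_0+1}\cdot(\sigma+2)^{-k_1}\cdot W_{\textrm{left}}$, and symmetrically on the reversed side. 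The limiting joint probability therefore factors as the product of the two initial laws \eqref{ini-law} with the corresponding products of transition probabilities, simultaneously delivering the Markov structure, the stated initial distributions, and the asymptotic independence of the left and right boundary processes. The main obstacle is the uniform upper bound on $\mathfrak W\topp L_{m,n}$ required to justify dominated convergence in the sum defining $\mathfrak C_{\vvalpha,\vvbeta,L}$; once this is in place, the rest of the argument is essentially an algebraic verification.
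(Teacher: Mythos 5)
Your approach is correct but genuinely different from the paper's. The paper proves the joint probability generating function identity \eqref{FixK} by representing $\mathfrak W\topp L_{m,n}$ as the orthogonal-polynomial integral $\frac{1}{2\pi}\int_{-2}^2 u_m(x)u_n(x)(x+\sigma)^L\sqrt{4-x^2}\,dx$ (Viennot's formula with Chebyshev polynomials of the second kind), packaging the generating function into the matrix/integral expression \eqref{*}, and sending $L\to\infty$ via the Laplace-type Lemma~\ref{L-lim-F}; the sums over $m,n$ are controlled there by uniform convergence of $\sum_n\alpha_n z^n u_n(x)$ on $[-2,2]$, which follows automatically from $|u_n(x)|\le n+1$ and \eqref{eq:integrable}, so no separate two-sided estimate on $\mathfrak W\topp L_{m,n}$ is needed. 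Your route instead works with cylinder probabilities directly, which indeed factor exactly as you write, and replaces the orthogonal-polynomial machinery by a combinatorial local limit analysis of $\mathfrak W\topp L_{m,n}$ (condition on the number of level steps, reflection principle for the up/down skeleton, Laplace/local CLT for the binomial sum). Both your pointwise asymptotic $\mathfrak W\topp L_{m,n}\sim c_\sigma(m+1)(n+1)(\sigma+2)^L L^{-3/2}$ and the telescoping identity $\mathsf P_{n,m}=\frac{m+1}{n+1}\cdot\frac{w(n\to m)}{\sigma+2}$ are correct and deliver the claimed limit with the size-biased initial laws and the asymptotic independence. What your method buys is elementarity---no Favard/Viennot apparatus. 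What it costs is the uniform upper bound $\mathfrak W\topp L_{m,n}\le C(m+1)(n+1)(\sigma+2)^L L^{-3/2}$ needed to justify dominated convergence in $\mathfrak C_{\vvalpha,\vvbeta,L}$; that bound is true (e.g.\ via the reflection identity $\mathfrak W\topp L_{m,n}=N^L_{|m-n|}-N^L_{m+n+2}$ for unconstrained step counts together with binomial-tail estimates uniform in all parameters), but it is a genuine piece of work, and you are right to flag it as the main obstacle that the paper's power-series argument sidesteps. Note also that your reliance on $\sum_n n\beta_n<\infty$ for the domination means this argument does not transport to Theorem~\ref{T-rho}, where $\vv\beta$ is not summable and the paper's analytic continuation of the Viennot integral becomes essential.
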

\begin{remark}
We assume $\sigma>0$.
We note that when $\sigma=0$, even for the marginal law of the end points of $\gamma_0\topp L,\gamma_L\topp L$ the asymptotic independence does not hold.
For example, with $\alpha_n=\beta_n=0$ for $n\ge 2$ and  $L=2N$, the end-points are dependent, $\gamma_0=\gamma_{2N}\in\{0,1\}$, and the limiting law is different and %
depends on both sequences $\vv \alpha, \vv \beta$.
$$\lim_{N\to\infty}\Pr(\gamma_{2N}\topp{2N}=1)= \frac{4 \alpha_1 \beta_1 }{\alpha_0\beta_0+4\alpha_1\beta_1} . $$
This implies that Theorem \ref{Thm-uni} does not hold for $\sigma=0$.
\end{remark}

\begin{remark}
Note that we do not scale in neither time nor the altitudes, and our limit theorems should be considered as {\em microscopic}  {and {\em local}}: both limit processes, despite that they are of infinite length, characterize asymptotic behaviors near the
end-points of a random Motzkin path as  the distance between %
the end-points goes to infinity. {Our results should be compared to limit theorems for random walks conditioned to stay non-negative, which concern {\em initial behaviors of the random walk} \citep{bertoin1994conditioning}.}

Another type of limit theorems concern {\em macroscopic} limit, with scaling both in time and altitudes. See for example \cite{kaigh1976invariance} and \cite{Bryc-Wang-2019}.
\end{remark}

\subsection{Second result: phase transition with general geometric boundary measures}
The assumption \eqref{eq:integrable}  says that the normalized boundary measures on both end points have finite mean. Our next result concerns the situation when the boundary measure on the right end point is an infinite measure. In fact,   we focus on choosing  geometric weights $\vv\beta$ as
\[
 \beta_n=\rho_1^n, \quad n\ge0,
\]
with  $\rho_1\ge 1$.
For $\Pr_L\equiv \Pr_{\vvalpha,\vvbeta,L}$ to be a well-defined probability measure, we need %
\equh\label{eq:rho_1}
\sif m0 m\alpha_m\rho_1^m<\infty %
\eque
to guarantee $\mathfrak C_{\vvalpha,\vvbeta,L}<\infty$.

 In this regime,
we obtain a different Markov chain in the limit
when $\rho_1>1$.
Its transition probabilities are
\begin{equation}
  \label{Z-trans}
  \mathsf Q_{n,m}\topp\rho:=\begin{cases}
\displaystyle \frac{1}{\rho+1/\rho+\sigma}  \;  \frac{\rho^{n+2} - 1/\rho^{n+2}}{\rho^{n+1}-1/\rho^{n+1}} & \mbox{ if } m=n+1\\\\
\displaystyle \frac{1}{\rho+1/\rho+\sigma}    \sigma & \mbox{ if } m=n \\\\
\displaystyle \frac{1}{\rho+1/\rho+\sigma}   \; \frac{\rho^n-1/\rho^{n}}{\rho^{n+1}-1/\rho^{n+1}} & \mbox{ if } m=n-1 \\\\
    0 & \mbox{otherwise},
  \end{cases} %
\end{equation}
where $\rho\ne 1$.
{We note that $\rho=1$ is a removable singularity, and in a more compact form one can write
\[
\mathsf Q_{n,m}\topp\rho:=\frac1{\rho^2+1+\rho\sigma}\frac{[m+1]_{\rho^2}}{[n+1]_{\rho^2}}\pp{\inddd{m-n= 1}
+\rho\sigma\inddd{m-n= 0}+\rho^2\inddd{m-n= -1}}, \quad\rho\ge 0,
\]
with $[n]_{\rho^2}:=1+\rho^2+\cdots+\rho^{2n}$, and in particular
 $\mathsf Q_{n,m} = \mathsf Q_{n,m}\topp 1$.
}

\begin{remark} %
When $\rho=1$ ($\rho>1$ resp.), $\mathsf Q\topp \rho$ with $\sigma=0$ correspond to the conditional law of a simple random walk (a biased random walk drifting to $\infty$ resp.) staying non-negative, as summarized in \citet{bertoin1994conditioning}. For $\sigma=0$, the Markov process appeared also previously in \citet[(2.a)]{miyazaki1989theorem}.
\end{remark}

\begin{theorem}\label{T-rho}
Consider weighted random Motzkin paths with general weights $\vvalpha$ on the left end points and geometric weights $\beta_n = \rho_1^n, n\ge 0$ on the right end points as above. Suppose $\rho_1\ge 1$ and \eqref{eq:rho_1} holds.
Then $\gamma_L\topp L$ is not tight as $L\to\infty$, and
   \[
   \pp{\ccbb{\gamma_k\topp L}_{k\ge 0},\ccbb{\wt\gamma_k\topp L-\wt\gamma_0\topp L}_{k\ge 1}} \weakto \pp{\ccbb{Z_k}_{k\ge 0}, \ccbb{\xi_k}_{k\ge 1}},
   \]
   where on the right-hand side, $\sccbb{Z_k}_{k\ge 0}$ is the Markov chain with transition probabilities $\{\mathsf Q_{n,m}\topp {\rho_1}\}_{n,m\ge 0}$ in \eqref{Z-trans}  and initial law $Z_0$ given by
   \equh\label{eq:Z_0 1}
\Pr( Z_0 = n) = \frac1{\mathfrak C_{\vvalpha,\rho_1}}\alpha_n \frac{\rho_1^{n+1}-\rho_1^{-(n+1)}}{\rho_1-\rho_1\inv},\quad n\ge 0,
\eque
with $\mathfrak C_{\vvalpha,\rho_1} = \sif n0\alpha_n (\rho_1^{n+1}-\rho_1^{-(n+1)})/(\rho_1-\rho_1\inv)$,
the family $\{\xi_k\}_{k\ge1}$ are i.i.d.~random variables with
\[%
     \Pr(\xi_1=\epsilon)=\frac{1}{\rho_1+1/\rho_1+\sigma}\times \begin{cases}
       1/\rho_1 & \mbox{ if } \epsilon=1\\
       \sigma & \mbox{ if } \epsilon=0 \\
       \rho_1 & \mbox{ if } \epsilon=-1,
     \end{cases}
\]%
   and the two families are independent.
\end{theorem}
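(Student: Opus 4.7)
The plan is to compute limits of finite-dimensional distributions of the left process $\sccbb{\gamma_k\topp L}$ and of the reverse-process increments $\sccbb{\wt\gamma_k\topp L - \wt\gamma_{k-1}\topp L}$, verify that they factor as the claimed product, and read off non-tightness of $\gamma_L\topp L$ in passing. Conditioning on the first $k+1$ vertices of the random Motzkin path gives
\equh
\Pr_L\pp{\gamma_0\topp L=v_0,\ldots,\gamma_k\topp L=v_k} = \frac{\alpha_{v_0}\, w(v_0,\ldots,v_k)\, H_{v_k}^{(L-k)}}{\mathfrak C_{\vvalpha,\vvbeta,L}}, \quad H_v^{(L)} := \sif n0 \mathfrak W\topp L_{v,n}\rho_1^n,
\eque
together with $\mathfrak C_{\vvalpha,\vvbeta,L} = \sif v0\alpha_v H_v^{(L)}$, so the whole problem reduces to $L\to\infty$ asymptotics of $H_v^{(L)}$.

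To obtain these I pass to the generating function $G_v(z):=\sif L0 H_v^{(L)}z^L$, which by the one-step Motzkin recursion satisfies the $v$-recurrence $zG_{v+1}+(\sigma z-1)G_v+zG_{v-1}=-\rho_1^v$ for $v\ge 1$ with boundary $(1-\sigma z)G_0-zG_1=1$. A particular solution is $\rho_1^v/(1-z\lambda)$ with $\lambda:=\rho_1+\rho_1^{-1}+\sigma$; the decaying homogeneous mode is $g_-(z)^v$, where $g_\pm$ are the roots of $zg^2+(\sigma z-1)g+z=0$ with $|g_-(0)|<1<|g_+(0)|$ and $g_\pm(1/\lambda)=\rho_1^{\pm 1}$. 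Matching the boundary condition isolates
\equh
G_v(z) = \frac{\rho_1^v - \rho_1^{-1}g_-(z)^v/g_+(z)}{1-z\lambda}.
\eque

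Singularity analysis now delivers the asymptotic. For $\rho_1>1$ the pole at $z=1/\lambda$ is strictly interior to the branch points $z=1/(\sigma\pm 2)$ of $g_\pm$, and the transfer theorem gives $H_v^{(L)}\sim \rho_1^{-1}\lambda^L(\rho_1^{v+1}-\rho_1^{-(v+1)})$. For $\rho_1=1$ the pole coincides with the branch point; the local expansion $g_\pm(z)\approx 1\pm(\sigma+2)\sqrt{1/(\sigma+2)-z}$ gives $G_v(z)\sim (v+1)/\sqrt{1/(\sigma+2)-z}$ and hence $H_v^{(L)}\sim (v+1)(\sigma+2)^{L+1/2}/\sqrt{\pi L}$. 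In either case, summing against $\alpha_v$ using \eqref{eq:rho_1} for dominated convergence yields
\equh
\frac{H_v^{(L-k)}}{\mathfrak C_{\vvalpha,\vvbeta,L}}\to\frac{1}{\lambda^k}\cdot\frac{\rho_1^{v+1}-\rho_1^{-(v+1)}}{(\rho_1-\rho_1^{-1})\mathfrak C_{\vvalpha,\rho_1}},
\eque
interpreted by continuity at $\rho_1=1$. Substituting into the conditional probability above and telescoping using the compact form of $\mathsf Q^{(\rho_1)}$ from the remark after \eqref{Z-trans} identifies the limit with $\Pr(Z_0=v_0)\prod_{i=1}^k\mathsf Q^{(\rho_1)}_{v_{i-1},v_i}$.

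For the joint limit with reverse-process increments $\epsilon_j\in\sccbb{-1,0,+1}$, $j=1,\ldots,k'$, I condition on both boundary segments at once. With $s_j:=\sum_{i\le j}\epsilon_i$ and $h_{k'}:=\#\sccbb{j:\epsilon_j=0}$, the joint probability of the left prefix $(v_0,\ldots,v_k)$ together with $\gamma_L\topp L=m$ and the prescribed right increments is $\alpha_{v_0}\,w(v_0,\ldots,v_k)\,\mathfrak W\topp{L-k-k'}_{v_k,m+s_{k'}}\sigma^{h_{k'}}\rho_1^m/\mathfrak C_{\vvalpha,\vvbeta,L}$. Summing over $m\ge\max(0,-\min_j s_j)$ with the substitution $m'=m+s_{k'}$ collapses the $m$-sum into $\rho_1^{-s_{k'}}H_{v_k}^{(L-k-k')}$ plus finitely many cutoff terms, whose contribution vanishes relative to $\mathfrak C_{\vvalpha,\vvbeta,L}$ by the pointwise estimate $\mathfrak W\topp L_{n,m}=O((\sigma+2)^L L^{-3/2})$ at fixed $n,m$. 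The limit factorises as the left marginal above times $\prod_j\sigma^{\inddd{\epsilon_j=0}}\rho_1^{-\epsilon_j}/\lambda=\prod_j\Pr(\xi_1=\epsilon_j)$, giving both the i.i.d.\ structure of $\sccbb{\xi_j}$ and asymptotic independence of the two boundary processes; non-tightness of $\gamma_L\topp L$ follows from the same pointwise control. The main technical obstacles are the singularity analysis at the degenerate point $\rho_1=1$ (where the pole coincides with the branch point, requiring the $\sqrt{\cdot}$ expansion above) and the uniform treatment of the lower cutoff $m\ge -\min_j s_j$, both of which hinge on the subleading local-CLT estimate for $\mathfrak W\topp L_{n,m}$ that makes boundedly many low-$m$ terms negligible relative to the dominant growth rate.
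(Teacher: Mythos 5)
Your proposal is a genuinely different route from the paper's proof. The paper extends Viennot's integral representation to the geometric boundary measure by an explicit analytic continuation (deforming a contour past a pole at $z=1/\rho_1$, Lemma \ref{L-rho-I}), which produces a probability measure $\mu_{\rho_1}$ of mixed type with an atom at $\rho_1+1/\rho_1$ when $\rho_1>1$; the phase transition is then handled uniformly by a single concentration lemma (Lemma \ref{L-lim-F-mu}) for integrals $\int(x+\sigma)^L\,d\mu$, and the matrix-product formulation \eqref{*-rho} keeps track of the two boundary segments at once. You instead work directly with $H_v^{(L)}=\sum_n\mathfrak W^{(L)}_{v,n}\rho_1^n$, solve the three-term recurrence for its ordinary generating function $G_v(z)$ explicitly, and read off asymptotics by singularity analysis, separating the $\rho_1>1$ (dominant simple pole) and $\rho_1=1$ (pole colliding with branch point, square-root singularity) cases. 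Your algebra for $G_v(z)$, the location of $g_\pm(1/\lambda)=\rho_1^{\pm 1}$, the asymptotic $H_v^{(L)}\sim\rho_1^{-1}\lambda^L(\rho_1^{v+1}-\rho_1^{-(v+1)})$ for $\rho_1>1$, the telescoping identification $\Pr(Z_0=v_0)\prod_i\mathsf Q^{(\rho_1)}_{v_{i-1},v_i}$ (using $[n+1]_{\rho^2}=\rho^n(\rho^{n+1}-\rho^{-(n+1)})/(\rho-\rho^{-1})$), and the factorization of the right-boundary increments into the claimed i.i.d.~law all check out. What your approach buys is a self-contained, Flajolet--Sedgewick-style argument avoiding the orthogonal-polynomial machinery; what the paper's approach buys is a cleaner unified treatment of $\rho_1=1$ and $\rho_1>1$ and a more transparent derivation of the $\mathsf Q^{(\rho)}$ transition kernel from the matrix identity \eqref{eq:2}.

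There is, however, a real gap at $\rho_1=1$ that you flag but do not close. For $\rho_1>1$ the crude bound $H_v^{(L)}\le\rho_1^v\lambda^L$ together with \eqref{eq:rho_1} justifies dominated convergence in $\mathfrak C_{\vv\alpha,\vv\beta,L}=\sum_v\alpha_v H_v^{(L)}$. But at $\rho_1=1$ that crude bound gives $H_v^{(L)}/\mathfrak C_{\vv\alpha,\vv\beta,L}=O(\sqrt{L})$, which is useless; you need a uniform-in-$L$ bound of the form $H_v^{(L)}\le C(v+1)(\sigma+2)^L/\sqrt{L}$ matching the pointwise limit, and your singularity analysis only gives the asymptotic, not the uniform upper bound. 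Similarly, the local-CLT-type estimate $\mathfrak W^{(L)}_{n,m}=O((\sigma+2)^L L^{-3/2})$ at fixed $n,m$ — which you use both to dispose of the lower-cutoff terms when summing over $m$ and to establish non-tightness of $\gamma_L^{(L)}$ — is plausible (it is a ballot/Bessel-type bound) but is never proved, and at $\rho_1=1$ it is essential because $\mathfrak C_{\vv\alpha,\vv\beta,L}$ itself scales only like $(\sigma+2)^L L^{-1/2}$, so the $O(L^{-3/2})$ is needed exactly, not just an $O(L^{-1/2})$. Until those two uniform estimates are supplied (e.g.~via a reflection-principle computation of $\mathfrak W^{(L)}_{n,m}$ in terms of binomial coefficients and a Stirling bound), the $\rho_1=1$ branch of the theorem is not established by your argument, whereas the paper's Lemma \ref{L-lim-F-mu} sidesteps the issue entirely.
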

\begin{remark}
There are two reasons that we work with the general geometric measure on the right end point. First, it seems that unlike when under first-moment assumption \eqref{eq:integrable} we could have a unified approach, based on a formula by \citet{Viennot-1984a}, when \eqref{eq:integrable} is violated one has to go through model-specific calculations. Second, the choice of geometric boundary measures is motivated by recent advances on asymmetric simple exclusion processes with open boundary. See Section \ref{sec:motivation}. The choice of geometric boundary measures is crucial at two steps. See Remark \ref{rem:geometric}. It would be interesting to work out another example of infinite boundary measures. {It is not clear to us whether other types of limit boundary processes may arise.}
\end{remark}
\begin{remark}%
When $\rho_1>1$, the boundary measure on the right end point  has a non-negligible influence,  depending specifically on $\rho_1$, on both the initial and transitional laws of the left boundary process. When $0\le \rho_1\le 1$, the  law of the left boundary process does not depend on the value of $\rho_1$.
\end{remark}

\subsection{Motivation from physics literature: geometric boundary measures on both ends}\label{sec:motivation}
Our original motivation came from %
some  results in the mathematical physics literature, where generalized %
measures
for both end points arise in a random Motzkin path
representation for the stationary measure of a totaly asymmetric simple exclusion process (TASEP) with open boundaries
 in \cite{derrida04asymmetric} and for a more general open asymmetric simple exclusion process %
 \cite{barraquand2022Motzkin}.
As suggested by one of the reviewers, our results imply a similar representation for the stationary measures of a half-open totally asymmetric simple exclusion process (TASEP) that %
appeared
 (in more generality) in \citet{liggett75ergodic}, which we now explain.

Recall that a TASEP with open boundaries is a continuous time finite-state Markov process that models a configuration of particles at sites $\{1,\dots,L\}$ which can hop to the adjacent empty spot on the right with rate 1. Particles can arrive at site 1 at rate $\alpha>0$ and can leave  site $L$ at rate $\beta>0$. The stationary distribution of this Markov process is referred to as the stationary measure of open TASEP.

\citet[Section 2.5]{derrida04asymmetric} %
 provided the following description of the stationary measure of open TASEP with boundary parameters $\alpha+\beta>1$  via random Motzkin paths with generalized geometric
boundary weights. Fix $L\ge 1$. Consider a random Motzkin path $\vv\gamma\topp L$ as in our setup of length $L$  selected using  boundary weights $\alpha_n = \rho_0^n, \beta_n = \rho_1^n, n\ge 0$ with
\[
\rho_0=\frac{1-\alpha}{\alpha}\in(0,1) \qmand \rho_1=\frac{1-\beta}{\beta},
\] and with constant wedge weights \eqref{eq:abc} with $\sigma=2$.
In our setup with general geometric weights, the assumption \eqref{eq:rho_1} now becomes
$\rho_0\rho_1<1$ which is exactly $\alpha+\beta>1$.

To construct the desired stationary measure, %
{paraphrasing \citet[Section 2.5]{derrida04asymmetric}}
 we first select a random path $\gamma\topp L\in \mathcal{M}\topp L$, and independently i.i.d.~Bernoulli random variables $\{\zeta_k\}_{k\ge 1}$ with parameter $1/2$, and next set
\equh\label{eq:tau L} %
\tau_k\topp L:=\begin{cases}
  1 & \mbox{ if } \gamma\topp L_{k}-\gamma\topp L_{k-1}=1 \\
  0 & \mbox{ if }\gamma\topp L_{k}-\gamma\topp L_{k-1}=-1\\
  \zeta_k & \mbox{ if }\gamma\topp L_{k}-\gamma\topp L_{k-1}=0, \\
\end{cases}
\eque
$ k = 1,\dots,L$.
The law of $(\tau_1\topp L,\dots,\tau_L\topp L)$ is the stationary measure of open TASEP with boundary parameters %
$\alpha<1$, $\beta<1$,
 $\alpha+\beta>1$.

Regarding the left boundary processes alone, our limit theorem states a phase transition, and we see a counterpart result in \citet{liggett75ergodic} that we now explain. For the sake of comparison, we re-state our limit theorem for the left boundary process first.
Let $G_p$ denote a geometric random variable with parameter $0<p<1$, i.e., $\Pr(G_p=n)=(1-p)p^{n}$, $n\ge 0$.
\begin{corollary}\label{coro:1}
Consider the random Motzkin paths with $\alpha_n = \rho_0^n, \beta_n = \rho_1^n, n\ge 0$, with $\rho_0\in(0,1)$ and $\rho_0\rho_1<1$.
Then with $\hat \rho:=\max\{1,\rho_1\}$, %
\[%
   \ccbb{\gamma_k\topp L}_{k\ge 0}\weakto \ccbb{Z_k}_{k\ge 0} \quad \mbox{ as $L\to\infty$,}
\]%
 where on the right-hand side, $\sccbb{Z_k}_{k\ge 0}$ is the Markov chain with transition probabilities $\{\mathsf Q_{n,m}\topp {\hat \rho}\}_{n,m\ge 0}$ in \eqref{Z-trans}  and
  $Z_0=G_{\rho_0\hat \rho}+\wt G_{\rho_0/\hat \rho}$ is the sum of two independent geometric random variables.
\end{corollary}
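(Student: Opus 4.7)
The plan is to deduce the corollary as a specialization of Theorems~\ref{Thm-uni} and~\ref{T-rho} to the geometric boundaries $\alpha_n=\rho_0^n$, $\beta_n=\rho_1^n$. Since only the left boundary process is asserted to converge, we retain only the first coordinate of the joint convergence given by those theorems. The argument splits into two regimes according to whether $\rho_1<1$ or $\rho_1\ge 1$; the assumption $\rho_0\rho_1<1$ is tailored precisely to the integrability hypothesis of the relevant theorem in each case.

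If $\rho_1<1$, then $\sum_n n\alpha_n<\infty$ and $\sum_n n\beta_n<\infty$, so Theorem~\ref{Thm-uni} applies: $\{\gamma_k\topp L\}\Rightarrow\{X_k\}$, a Markov chain with kernel $\mathsf P=\mathsf Q\topp 1=\mathsf Q\topp{\hat\rho}$ and initial law $\Pr(X_0=n)\propto(n+1)\rho_0^n$. If $\rho_1\ge 1$, then $\hat\rho=\rho_1$ and the condition \eqref{eq:rho_1} reduces to $\sum_m m(\rho_0\rho_1)^m<\infty$, which is exactly the standing assumption $\rho_0\rho_1<1$; Theorem~\ref{T-rho} then delivers convergence of $\{\gamma_k\topp L\}$ to the Markov chain with kernel $\mathsf Q\topp{\rho_1}$ and initial law \eqref{eq:Z_0 1}.

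The remaining work is to identify, in each regime, the initial law with $G_{\rho_0\hat\rho}+\wt G_{\rho_0/\hat\rho}$. Recall that for $p,q\in(0,1)$ with $p\ne q$,
\[
\Pr(G_p+\wt G_q=n)=(1-p)(1-q)\,\frac{p^{n+1}-q^{n+1}}{p-q}.
\]
In the case $\rho_1<1$, we have $\hat\rho=1$, so $G_{\rho_0\hat\rho}+\wt G_{\rho_0/\hat\rho}=G_{\rho_0}+\wt G_{\rho_0}$ whose pmf is proportional to $(n+1)\rho_0^n$, matching $\Pr(X_0=n)$. In the case $\rho_1\ge 1$, substituting $\alpha_n=\rho_0^n$ in \eqref{eq:Z_0 1} and regrouping yields
\[
\rho_0^n\bigl(\rho_1^{n+1}-\rho_1^{-(n+1)}\bigr)=\rho_0^{-1}\bigl((\rho_0\rho_1)^{n+1}-(\rho_0/\rho_1)^{n+1}\bigr),
\]
so setting $p=\rho_0\rho_1$ and $q=\rho_0/\rho_1$ (both in $(0,1)$ by the standing assumption) gives $\Pr(Z_0=n)\propto p^{n+1}-q^{n+1}$, which matches $G_p+\wt G_q=G_{\rho_0\hat\rho}+\wt G_{\rho_0/\hat\rho}$ once the normalizing constant is verified.

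The only mild subtlety is the borderline $\rho_1=1$, where Theorem~\ref{Thm-uni} is inapplicable because $\sum_n n\beta_n=\infty$. I would handle this by invoking Theorem~\ref{T-rho} directly at $\rho_1=1$, using the removable-singularity form of $\mathsf Q\topp\rho$ and the limit $\lim_{\rho\to 1}(\rho^{n+1}-\rho^{-(n+1)})/(\rho-\rho^{-1})=n+1$ in \eqref{eq:Z_0 1}, which shows that the two regimes agree continuously across $\rho_1=1$. I do not anticipate a substantive obstacle: the main theorems do the heavy lifting, and the corollary reduces to a direct identification of a convolution of two geometric distributions.
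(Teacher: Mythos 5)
Your proposal is correct and follows essentially the same route as the paper: split on $\rho_1<1$ versus $\rho_1\in[1,1/\rho_0)$, invoke Theorem~\ref{Thm-uni} or Theorem~\ref{T-rho} respectively, and identify the initial law as a convolution of two independent geometrics. The only cosmetic difference is that you identify the law by matching pmfs via the algebraic identity $\rho_0^n\bigl(\rho_1^{n+1}-\rho_1^{-(n+1)}\bigr)=\rho_0^{-1}\bigl((\rho_0\rho_1)^{n+1}-(\rho_0/\rho_1)^{n+1}\bigr)$, whereas the paper computes the probability generating function $\EE z^{Z_0}$ and factors it; both are immediate and your explicit remark on the removable singularity at $\rho_1=1$ is already implicit in the paper's inclusion of $\rho_1=1$ in the Theorem~\ref{T-rho} regime.
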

\begin{proof}%
If $\rho_1<1$, then $\hat \rho=1$ and the conclusion follows from Theorem \ref{Thm-uni}, with \eqref{ini-law} giving the negative binomial  law for $Z_0$. If $\rho_1\in[1,1/\rho_0)$, then $\hat\rho=\rho_1$ and the result follows from
Theorem \ref{T-rho}. In this case,
\[
\EE z^{Z_0} = \frac{\sum_{n=0}^\infty \rho_0^n z^n \pp{\rho_1^{n+1}-1/\rho_1^{n+1}}/(\rho_1-1/\rho_1)}{\sum_{n=0}^\infty \rho_0^n \pp{\rho_1^{n+1}-1/\rho_1^{n+1}}/(\rho_1-1/\rho_1)}
=\frac{(1-\rho_0\rho_1)(1-\rho_0/\rho_1)}{(1-z \rho_0\rho_1)(1-z \rho_0/\rho_1)},
\]
identifying the law as desired.
The expression \eqref{eq:Z_0 1} gives the so-called $q$-negative binomial law, see  \cite [Theorem 3.1 with parameters  $k=2$, $q=\rho_1^2$, $\theta=\rho_0/\rho_1$]{charalambides2016discrete}.
\end{proof}

Corollary \ref{coro:1}  together with  \citet[Theorem 3.10(a)]{liggett75ergodic} gives an explicit description of the (non-unique) stationary measures of an open TASEP on a half-line, recovering all the stationary measures that appear in \citet[Theorem 1.8(a)]{liggett75ergodic} for $q=0$. Namely,  with $\tau_k\topp L := 0$ for all $k>L$ (recall \eqref{eq:tau L}),  %
 \citet[Theorem 3.10(a)]{liggett75ergodic} implies that
\equh\label{eq:liggett1975}
\left\{\tau_k\topp L\right\}_{k\ge 1}\weakto \left\{\tau_k\right\}_{k\ge 1},
\eque
 where the law of  $\{\tau_k\}_{k\ge 1}$ is a stationary measure of the half-open TASEP with Liggett's parameters $\la=\alpha=1/(1+\rho_0)>1/2$,  $q=0$, and the other parameter in \citet[Theorem 1.8(a)]{liggett75ergodic} given by $1-\beta=\rho_1/(1+\rho_1)$.
 Representation \eqref{eq:tau L} together with Corollary \ref{coro:1} give the following representation for Liggett's two-parameter stationary measure $\mu$: %
\begin{equation}
  \label{eq:tau infty}
  \tau_k:=\begin{cases}
  1 & \mbox{ if } Z_{k}-Z_{k-1}=1 \\
  0 & \mbox{ if }Z_{k}-Z_{k-1}=-1 \\
  \zeta_k & \mbox{ if }Z_{k}-Z_{k-1}=0,
\end{cases}
\end{equation}
$k\ge 1$,
and $\{\zeta_k\}_{k\ge 1}$ are i.i.d.~Bernoulli random variables with parameter $1/2$, independent from $Z$.  Obviously,  \eqref{eq:liggett1975} and Corollary \ref{coro:1} imply that the law of random variables
\eqref{eq:tau infty} is indeed a stationary measure of TASEP on $\NN$, which  was denoted by
$\mu(1/(1+\rho_0),\rho_1/(1+\rho_1))$
in \cite{liggett75ergodic}.
By Corollary \ref{coro:1}
the law of $\{\tau_k\}$  as defined by \eqref{eq:tau infty} does not depend on $\rho_1$ when $\rho_1\leq 1$ and this matches the phase transition in \citet[Theorem 1.8(a)]{liggett75ergodic}.

The limit process $\{Z_k\}_{k\ge 0}$, %
when $\hat\rho = 1$,
 can also be viewed as a discrete version of a Bessel process, with random initial position. %

Consider the limit
process
$Z$ in Corollary \ref{coro:1}.
By choosing the parameters $\rho_0,\rho_1$ appropriately, the increment process of $Z$ therein scales to the process corresponding to the %
non-Gaussian
 components of the conjectured {\em stationary measure of half-line KPZ fixed point} \citep{barraquand2022steady,Bryc-Kuznetsov-2021}, represented by the right-hand side of \eqref{eq:BD} below.
 In particular, in %
 \cite{Bryc-Wesolowski-2023} the following is shown.
With $\rho_0\topp n=1-u/\sqrt{n} $,
 $\rho_1\topp n=1-v/\sqrt{n}$, for fixed $u>0$,
 {$v\leq 0$,} $u+v>0$, and letting $\{Z_k\topp n\}_{k\ge 0}$ denote the process $Z$ with parameters $\rho_0\topp n,\rho_1\topp n$, we have %
 \equh\label{eq:BD}
 \frac{1}{\sqrt{n}}
 \ccbb{Z_{\floor{nt}}\topp n - Z_0\topp n}_{t\ge 0} \weakto \ccbb{2\pp{\sup_{0\le s\le t}B\topp v_s - \frac1{u+v}%
 \mathcal E}_+-B_t\topp v}_{t\ge 0},
 \eque
 as $n\to\infty$ in the space of $D([0,\infty))$,
 where on the right-hand side $B_t\topp v := \mathbb B_{2t/(2+\sigma)} + vt/(2+\sigma), t\ge 0$ with $\{\mathbb B_t\}_{t\ge 0}$ a standard Brownian motion, and $\mathcal E$ a standard exponential random variable independent from $\{B_t\topp v\}_{t\ge 0}$.

The paper is organized as follows. In Section \ref{sec:Viennot} we recall Viennot's formula. In Sections \ref{sec:3} and \ref{sec:4} we prove Theorems \ref{Thm-uni} and \ref{T-rho} respectively.

\section{Viennot's formula}\label{sec:Viennot}
A key ingredient of our proof is  a formula in \cite{Viennot-1984a}.
Given three sequences $\vv a, \vv  b, \vv c$ of the edge-weights for the Motzkin paths, following
\cite{Viennot-1984a} and \cite{flajolet09analytic} we define real polynomials  $p_{-1}(x)=0$, $p_0(x)=1,p_1(x),\dots$ by the three step recurrence
\begin{equation}
  \label{3-step}
  x p_n(x)=a_n p_{n+1}(x)+b_n p_n(x)+c_np_{n-1}(x),\quad n=0,1,2,\dots
\end{equation}
(With the usual conventions that $p_{-1}(x)=0$,  $p_0(x):=1$ and $a_n>0$, the recursion determines polynomials $\{p_n(x)\}$ uniquely, with $p_1(x)=(x-b_0)/a_0$.)
By Favard's theorem \citep{ismail09classical}, polynomials $\{p_n(x)\}$  are orthogonal with respect to a (possibly non-unique) probability measure $\nu$ on the real line.
It is well known that the $L_2$ norm   $ \|p_n\|_2^2:=\int_\RR (p_n(x))^2 \nu(dx)$ is given by the formula
\[%
  \|p_n\|_2^2=\prod_{k=1}^n \frac{c_k}{a_{k-1}}.
\]%
\begin{proposition}[Viennot's formula]\label{prop:Viennot}
We have
\equh
  \label{Viennot5}
  \int_\RR p_m(x)p_n(x) x^L\nu(dx) =\|p_n\|_2^2\sum_{\gamma\in \calM_{m,n}\topp L} w(\vv \gamma).
\eque
\end{proposition}
\begin{proof}
The identity \eqref{Viennot5} can be found in \citet[(5)]{Viennot-1984a}.
Since the weights in \cite{Viennot-1984a} are slightly less general   and the proof there is only sketched, we provide a self-contained proof for \eqref{Viennot5}.
We give a proof by induction on $L$. Recall $\mathfrak W\topp L_{m,n}$ in  \eqref{Wij}.
It is easy to see that for $L=1$ the only non-zero weights are $\mathfrak W\topp 1_{m,m-1}=c_m$, $\mathfrak W\topp 1_{m,m}=b_m$ and $\mathfrak W\topp 1_{m,m+1}=a_m$.  On the other hand, from the three step recursion \eqref{3-step} we see that the integrals $\int x p_m(x)p_{n}(x)\nu(dx)$ are zero, except for the following three cases:
$\int x p_m(x)p_{m-1}(x)\nu(dx)=c_m \|p_{m-1}\|_2^2$,  $\int x p_m(x)p_{m}(x)\nu(dx)=b_m  \|p_{m}\|_2^2$ and $\int x p_m(x)p_{m+1}(x)\nu(dx)=a_m \|p_{m+1}\|_2^2$. Thus $
M\topp 1_{m,n}=\mathfrak W\topp 1_{m,n}\|p_n\|_2^2$ for all $m,n\geq 0$, i.e. \eqref{Viennot5} holds for $L=1$.

Next, we use \eqref{w} to observe that
$$\mathfrak W_{m,n}\topp{L+1}=a_m \mathfrak W_{m+1,n}\topp L + b_m \mathfrak W_{m,n} \topp L+c_{m}\mathfrak W_{m-1,n} \topp L.$$
From the three step recursion \eqref{3-step} we see that the same recursion holds for $M_{m,n}\topp{L+1}$. This proves  \eqref{Viennot5} by mathematical induction.
\end{proof}

\section{Proof of Theorem \ref{Thm-uni}}\label{sec:3}

We first specify Viennot's formula to the setting of Theorem \ref{Thm-uni}. In this case, recursion \eqref{3-step} becomes
$$
x p_n(x)=p_{n+1}(x)+\sigma p_n(x)+p_{n-1}(x),
$$ so  polynomials $\{p_n\}$ are just the monic Chebyshev polynomials of the second kind with shifted argument, $p_n(x)=u_n(x-\sigma)$ where  $u_n(2\cos \theta)=\sin ((n+1) \theta)/\sin \theta$
 satisfy recursion
\[%
x u_n(x)=u_{n+1}(x)+u_{n-1}(x), \quad n=0,1,\dots
\]%
with $u_{-1}(x)\equiv 0,u_0(x)\equiv 1$. One readily checks from the above that
\begin{equation}\label{u(rho)}   u_n\pp{z+z\inv}=z^{-n}\sum_{k=0}^{n} z^{2k}= \begin{cases}     \displaystyle \frac{z^{n+1}-{z^{-(n+1)}}}{z-z\inv},  &\mbox{ if }z \ne 1, \\  \\    n+1, &\mbox{ if } z=1.  \end{cases}\end{equation}
 In particular, polynomials $\{p_n\}_{n\ge 0}$ are orthogonal with respect to probability measure
\[
\nu(dx)=\frac{\sqrt{4-(x-\sigma)^2}}{2\pi}\inddd{|x-\sigma|<2}dx.
\]
Introduce the probability generating function
 for the end-points of a random Motzkin path
 of length $L$,
\[
\EE \left[z_0^{\gamma_0\topp L}z_1^{\gamma_L\topp L}\right]:= \sum_{\gamma\in \calM\topp L} z_0^{\gamma_0}z_1^{\gamma_L}\Pr_L(\gamma),\; L\ge 1,
\]
and also \begin{equation}
  \label{Phi-alpha}\Phi_{\vv \alpha}(z,x):=\sum_{n=0}^\infty \alpha_n z^n u_n(x),\quad \Phi_{\vv \beta}(z,x):=\sum_{n=0}^\infty \beta_n z^n u_n(x).
\end{equation}
\begin{lemma}
 Under the assumptions of Theorem \ref{Thm-uni}, we have
  \begin{equation}
    \label{Viennot5+}
    \frac{1}{2\pi}\int_{-2}^2 u_m(x)u_n(x)(x+\sigma)^L \sqrt{4-x^2}dx=\sum_{\vv\gamma\in \calM_{m,n}\topp L} w(\vv \gamma),
  \end{equation}
 and
  \begin{equation}\label{Ends-g}
  \EE \left[z_0^{\gamma_0\topp L}z_1^{\gamma_L\topp L}\right]= \frac{\mathsf M_{\vvalpha,\vvbeta,L}( z_0, z_1)}{\mathsf M_{\vvalpha,\vvbeta,L}(1,1)},
  \end{equation}
  where
  \begin{equation}\label{ZL-g}
    \mathsf M_{\vvalpha,\vvbeta,L}(z_0,z_1):=\frac{1}{2\pi}\int_{-2}^{2} \Phi_{\vv \alpha}(z_0,x) \Phi_{\vv \beta}(z_1,x) (x+\sigma)^L\sqrt{4-x^2}d x.
  \end{equation}
\end{lemma}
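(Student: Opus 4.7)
The plan is to specialize Viennot's formula (Proposition \ref{prop:Viennot}) to the edge-weight choice \eqref{eq:abc} and then deduce \eqref{Ends-g} by an elementary Fubini argument on the probability-generating-function expansion.

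First, under \eqref{eq:abc} the recursion \eqref{3-step} reduces to $xp_n(x) = p_{n+1}(x) + \sigma p_n(x) + p_{n-1}(x)$, so $p_n(x) = u_n(x-\sigma)$ and
\[
\|p_n\|_2^2 = \prod_{k=1}^n \frac{c_k}{a_{k-1}} = 1.
\]
The orthogonality measure is $\nu(dx) = (2\pi)^{-1}\sqrt{4-(x-\sigma)^2}\,\mathbf{1}_{|x-\sigma|<2}\,dx$. Plugging this into the conclusion \eqref{Viennot5} of Proposition \ref{prop:Viennot} and making the change of variables $y = x-\sigma$ immediately yields \eqref{Viennot5+}.

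Next, to get \eqref{Ends-g}, I would expand
\[
\EE\bigl[z_0^{\gamma_0^{(L)}} z_1^{\gamma_L^{(L)}}\bigr] = \frac{1}{\mathfrak C_{\vvalpha,\vvbeta,L}} \sum_{m,n \ge 0} \alpha_m z_0^m \beta_n z_1^n \mathfrak W^{(L)}_{m,n}
\]
directly from the definition of $\Pr_L$, substitute \eqref{Viennot5+} for $\mathfrak W^{(L)}_{m,n}$, and interchange sum with integral to recognize the expression as $\mathsf M_{\vvalpha,\vvbeta,L}(z_0,z_1)/\mathfrak C_{\vvalpha,\vvbeta,L}$. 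Setting $z_0 = z_1 = 1$ then identifies $\mathfrak C_{\vvalpha,\vvbeta,L} = \mathsf M_{\vvalpha,\vvbeta,L}(1,1)$, which delivers \eqref{Ends-g}.

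The only delicate point, and the hard part of the argument, is to justify the Fubini exchange (this will simultaneously verify the absolute-integrability hypothesis \eqref{A-2} needed to apply Proposition \ref{prop:Viennot} in the first place). The standard estimate $|u_n(y)| \le n+1$ for $y \in [-2,2]$, inherited from $u_n(2\cos\theta) = \sin((n+1)\theta)/\sin\theta$, combined with $|y+\sigma|^L \le (2+\sigma)^L$ on $[-2,2]$, should give for $|z_0|,|z_1|\le 1$
\[
\sum_{m,n\ge 0} \alpha_m |z_0|^m \beta_n |z_1|^n \int_{-2}^2 |u_m(y)u_n(y)|(y+\sigma)^L \frac{\sqrt{4-y^2}}{2\pi}\,dy \le (2+\sigma)^L \Bigl(\sum_m (m+1)\alpha_m\Bigr)\Bigl(\sum_n (n+1)\beta_n\Bigr),
\]
which is finite by \eqref{eq:integrable}, since $\sum_{n\ge 1} n\alpha_n<\infty$ forces $\sum_n \alpha_n<\infty$ and hence $\sum_n (n+1)\alpha_n<\infty$, and likewise for $\vvbeta$. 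Beyond this verification the argument is mechanical.
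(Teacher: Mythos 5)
Your proposal is correct and follows essentially the same route as the paper: specialize Proposition \ref{prop:Viennot} to the weights \eqref{eq:abc}, identify $p_n(x)=u_n(x-\sigma)$, $\|p_n\|_2^2=1$, and the shifted semicircle measure, obtain \eqref{Viennot5+} by the change of variables $y=x-\sigma$, then expand the generating function, substitute \eqref{Viennot5+}, and justify Fubini via $|u_n(y)|\le n+1$ on $[-2,2]$ together with \eqref{eq:integrable}. (One trivial slip: in your Fubini estimate the integrand should carry $|y+\sigma|^L$ rather than $(y+\sigma)^L$, since $y+\sigma$ can be negative on $[-2,2]$ when $\sigma<2$; the stated bound $(2+\sigma)^L$ is of course still valid.)
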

\begin{proof}
It is clear that $\| p_n\|_2^2=1$.
 Then, \eqref{Viennot5+} follows from \eqref{Viennot5} in Proposition \ref{prop:Viennot} and a change of variables.

For \eqref{Ends-g}, first notice that, under the
assumptions %
\eqref{eq:integrable}
and $z\leq 1$,  the series
$\Phi_{\vv\alpha}(z,x),\Phi_{\vv\beta}(z,x)$
converge uniformly in $x\in[-2,2]$, and furthermore
\[
 \int_\RR \sum_{m,n=0}^\infty \alpha_n  {\beta_m} \abs{x^Lp_n(x)p_m(x)}\nu(dx)<\infty.
\]
(Recall also $|u_n(x)|\leq u_n(2)= n+1$ for $|x|\leq 2$.)
Then, we can write
\[
  \EE \left[z_0^{\gamma_0\topp L}z_1^{\gamma_L\topp L}\right] = \frac{\sif m0\sif n0\alpha_mz_0^m\beta_nz_1^n\sum_{\vv\gamma\in\calM_{m,n}\topp L}w(\vv\gamma)}{\sif m0\sif n0\alpha_m\beta_n\sum_{\vv\gamma\in\calM_{m,n}\topp L}w(\vv\gamma)}.
\]
So it suffices to show
\[
\mathsf M_{\vvalpha,\vvbeta,L}(z_0,z_1) = \sif m0\sif n0\alpha_mz_0^m\beta_nz_1^n\sum_{\vv\gamma\in\calM_{m,n}\topp L}w(\vv\gamma).
\]
To see this, it suffices to start from the right-hand side above and apply \eqref{Viennot5+} and Fubini's theorem.
\end{proof}
We also need the following.
\begin{lemma}\label{L-lim-F}  If $\sigma>0$ and $F$ is a continuous function on $[-2,2]$   then
   \begin{equation}\label{Lim-F}
  \lim_{L\to\infty}\frac{\int_{-2}^2F(x)(x+\sigma)^L \sqrt{4-x^2}dx}{\int_{-2}^2 (x+\sigma)^L\sqrt{4-x^2}dx}=F(2).
\end{equation}
\end{lemma}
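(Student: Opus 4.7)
The statement is a standard Laplace-type concentration result: since $\sigma>0$, the weight $(x+\sigma)^L$ on $[-2,2]$ is maximized uniquely at $x=2$ with value $(2+\sigma)^L$, and for large $L$ the normalized measure
$$\nu_L(dx) = \frac{(x+\sigma)^L\sqrt{4-x^2}}{\int_{-2}^2(y+\sigma)^L\sqrt{4-y^2}dy}\,\indd{x\in[-2,2]}dx$$
concentrates at $x=2$. Since $F$ is continuous and bounded on the compact interval $[-2,2]$, weak convergence $\nu_L\Rightarrow\delta_2$ would immediately yield \eqref{Lim-F}, so the plan is to establish this weak convergence, which reduces to showing $\nu_L([-2,2-\delta])\to 0$ for every $\delta\in(0,4)$.

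For the denominator, I would lower-bound the mass of a shrinking neighborhood of $2$: using $\sqrt{4-x^2}\ge \sqrt{2(2-x)}$ and the monotonicity of $x\mapsto (x+\sigma)^L$ for $x>-\sigma$, on the interval $[2-\eta,2]$ with $\eta:=\delta/2$ we get
$$\int_{-2}^2(x+\sigma)^L\sqrt{4-x^2}dx \ge (2+\sigma-\eta)^L\int_{2-\eta}^2\sqrt{2(2-x)}\,dx = \frac{2\sqrt{2}}{3}\eta^{3/2}(2+\sigma-\eta)^L.$$
For the numerator, on $[-2,2-\delta]$ one has the crude bound
$$\int_{-2}^{2-\delta}(x+\sigma)^L\sqrt{4-x^2}\,dx \le 8(2+\sigma-\delta)^L.$$
Taking the ratio,
$$\nu_L([-2,2-\delta]) \le \frac{12\sqrt{2}}{\delta^{3/2}}\left(\frac{2+\sigma-\delta}{2+\sigma-\delta/2}\right)^L\longrightarrow 0 \quad\text{as } L\to\infty,$$
since $\sigma>0$ ensures both factors inside the ratio are positive and the ratio is strictly less than $1$.

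With this tail bound in hand, the conclusion follows by the usual $\epsilon/2$-argument: given $\epsilon>0$, choose $\delta>0$ so that $|F(x)-F(2)|<\epsilon$ on $[2-\delta,2]$ by continuity of $F$ at $2$, split
$$\int_{-2}^2(F(x)-F(2))\,d\nu_L(x) = \int_{[2-\delta,2]} + \int_{[-2,2-\delta]},$$
bound the first integral by $\epsilon$ and the second by $2\|F\|_\infty\cdot\nu_L([-2,2-\delta])$, and let $L\to\infty$. The only place where $\sigma>0$ enters critically is in guaranteeing that $x=2$ is the \emph{strict} maximizer of $x+\sigma$ on $[-2,2]$ at a positive value, making the exponential ratio decay; this is the step that the hypothesis $\sigma>0$ in Theorem \ref{Thm-uni} ultimately rests on, and it is the only substantive point in the proof.
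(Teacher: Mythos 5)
Your overall plan --- Laplace-type concentration of $(x+\sigma)^L\sqrt{4-x^2}\,dx$ at $x=2$ followed by an $\epsilon/2$ split of $F-F(2)$ --- is the same argument the paper uses; the paper simply invokes the general Lemma~\ref{Lim-F-mu} from the appendix with $\mu$ the semicircle law and $R=2$, and the proof of that lemma is precisely such a concentration estimate.

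There is, however, a genuine gap. When $\sigma<2$ (which the hypotheses allow) and $L$ is odd, the integrand $(x+\sigma)^L$ is negative on $(-2,-\sigma)$. Consequently the normalized object $\nu_L$ you introduce is a \emph{signed} measure rather than a probability measure, so ``$\nu_L\Rightarrow\delta_2$'' does not make sense as stated and $\nu_L([-2,2-\delta])$ is not a tail probability. More concretely, your lower bound on the denominator,
\begin{equation*}
\int_{-2}^2(x+\sigma)^L\sqrt{4-x^2}\,dx \;\ge\; (2+\sigma-\eta)^L\int_{2-\eta}^2\sqrt{2(2-x)}\,dx,
\end{equation*}
implicitly uses $\int_{-2}^{2-\eta}(x+\sigma)^L\sqrt{4-x^2}\,dx\ge 0$, which is false for odd $L$ when $\sigma<2$. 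Relatedly, your numerator bound $\int_{-2}^{2-\delta}(x+\sigma)^L\sqrt{4-x^2}\,dx\le 8(2+\sigma-\delta)^L$ is only correct in absolute value when $2+\sigma-\delta\ge 2-\sigma$, i.e.\ $\delta\le 2\sigma$. The paper's Lemma~\ref{Lim-F-mu} handles exactly this point: it restricts $\delta\le\sigma/2$, splits the denominator at $x=0$, and bounds $\int_{-2}^0|x+\sigma|^L\,\mu(dx)\le(\max\{2-\sigma,\sigma\})^L$ as a separate, exponentially negligible contribution. Your argument can be repaired in the same way --- take $\delta\le\min\{\sigma,2\}$ (harmless, since the $\epsilon/2$ step only ever uses the $\delta$ supplied by continuity of $F$ at $2$, which may be taken as small as you like), add the estimate $\bigl|\int_{-2}^{0}(x+\sigma)^L\sqrt{4-x^2}\,dx\bigr|\le 4\,(\max\{2-\sigma,\sigma\})^L=o\bigl((2+\sigma-\eta)^L\bigr)$, and then both the lower bound on the denominator and the smallness of the contribution from $[-2,2-\delta]$ follow. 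Until that sign issue is addressed, the proof as written is incomplete.
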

This is Lemma \ref{Lim-F-mu} applied to the semicircle law $\mu$ with $R=2$.
\begin{proof}[Proof of Theorem \ref{Thm-uni}]
Recall that weak convergence of discrete-time processes means convergence of finite-dimensional distributions \citep{billingsley99convergence}. For integer-valued random variables, the latter
follows from convergence of probability generating functions. We will therefore show
 that for every fixed $K$, $z_0,z_1\in(0,1]$ and $t_1,\dots,t_K, s_1,\dots,s_K>0$
\equh\label{FixK}
\lim_{L\to\infty}  \EE \bb{z_0^{\gamma_0\topp L} \prod_{j=1}^K t_j^{\gamma_j\topp L-\gamma_{j-1}\topp L} \prod_{j=1}^K s_j^{\gamma_{L-j}\topp L-\gamma_{L+1-j}\topp L}z_1^{\gamma_L\topp L}}
=\EE\bb{ z_0^{X_0} \prod_{j=1}^K t_j^{X_j-X_{j-1}}}\EE\bb{ z_1^{Y_0} \prod_{j=1}^K s_j^{Y_j-Y_{j-1}}}.
\eque
Indeed, the above expressions determine uniquely the corresponding probability generating functions for small enough arguments. For example,
$$\EE\bb{ \prod_{j=0}^K v_j^{X_j}}=\EE\bb{ z_0^{X_0} \prod_{j=1}^K t_j^{X_j-X_{j-1}}}$$
with
$z_0=v_0\dots v_K$ and $t_j=v_jv_{j+1}\dots v_K$.

We introduce a tri-diagonal matrix
\[%
\mM_t:=\left[\begin{matrix}
  \sigma & t & 0 &0 &\cdots \\
  1/t & \sigma &t &0 &\\
  0 & 1/t & \sigma&t &\\
 0 & 0& 1/t & \ddots &\ddots\\
 \vdots &&&\ddots&\ddots
\end{matrix}\right],
\]%
and column vectors
\[%
  \vec V_\vvalpha(z) :=
\left[\begin{matrix}
  \alpha_0\\
    \alpha_1z\\
\vdots \\
 \alpha_n z^n \\ \vdots
\end{matrix} \right],\quad
\vec W_\vvbeta(z) := \left[\begin{matrix}
  \beta_0 \\ \beta_1 z   \\ \vdots \\ \beta_n z^n \\ \vdots
\end{matrix} \right],\quad \vec U(x):= \left[\begin{matrix}
  u_0(x) \\ u_1(x)    \\ \vdots \\ u_n(x) \\ \vdots
\end{matrix} \right],
\]%
where $\{u_k(x)\}_{k\ge 0}$ are the monic Chebyshev polynomials of the second kind that already appeared in \eqref{Phi-alpha}.
The key identity of the proof is the following:
\begin{multline}\label{*}
\EE \bb{z_0^{\gamma_0\topp L} \prod_{j=1}^K t_j^{\gamma_j\topp L-\gamma_{j-1}\topp L} \prod_{j=1}^K s_j^{\gamma_{L-j}\topp L-\gamma_{L+1-j}\topp L}z_1^{\gamma_L\topp L}}
\\=\frac{1}{\mathfrak C_{\vvalpha,\vvbeta,L}} \Vec V_\vvalpha(z_0)^T \mM_{t_1}\mM_{t_2}
\cdots \mM_{t_K} \mM_1^{L-2K} \mM_{1/s_K}\cdots \mM_{1/s_2}\mM_{1/s_1}\vec W_\vvbeta(z_1).
\end{multline}
Here $\mathfrak C_{\vvalpha,\vvbeta,L}=\mathsf M_{\vvalpha,\vvbeta,L}(1,1)$ is given by \eqref{ZL-g} with $z_0=z_1=1$ and
\begin{equation}\label{M2L-K}
   \mM_1^{L-2K} =\frac{1}{2\pi} \int_{-2}^2 \vec U(x) \vec U(x)^T(x+\sigma)^{L-2K} \sqrt{4-x^2}dx.
\end{equation}
 Indeed, the $(m,n)$-entry of $\mM_1^{L-2K}$ is $\mathfrak M_{m,n}\topp {L-2K} = \sum_{\vv\gamma\in\calM_{m,n}\topp {L-2K}}w(\vv \gamma)$, to which we applied Viennot's formula \eqref{Viennot5+}.

Note that on the right-hand side of \eqref{*}, the terms depending on $L$ are %
\[
\frac{\mM_1^{L-2K}}{\mathfrak C_{\vvalpha,\vvbeta,L}}  = \frac{\mM_1^{L-2K}}{\mathsf M_{\vvalpha,\vvbeta,L}(1,1)}
 =  \frac{\frac1{2\pi}\int_{-2}^2 (x+\sigma)^{L-2K}\sqrt{4-x^2}dx}{\mathsf M_{\vvalpha,\vvbeta,L}(1,1)}\cdot \frac{\mM_1^{L-2K}}{\frac1{2\pi}\int_{-2}^2 (x+\sigma)^{L-2K}\sqrt{4-x^2}dx},
\]
and for each ratio we apply Lemma \ref{L-lim-F}.
Namely, for the first ratio we have
 \begin{align*}\frac{\mathsf M_{\vvalpha,\vvbeta,L}(1,1)}{\frac{1}{2\pi}\int_{-2}^2(x+\sigma)^{L-2K}\sqrt{4-x^2} dx}&=
 \frac{\frac{1}{2\pi}\int_{-2}^2(x+\sigma)^{L}\Phi_{\vv \alpha}(1,x)\Phi_{\vv \beta}(1,x)\sqrt{4-x^2} dx}{\frac{1}{2\pi}\int_{-2}^2(x+\sigma)^{L-2K}\sqrt{4-x^2} dx}\\
 & \to (2+\sigma)^{2K}\Phi_{\vv\alpha}(1,2)\Phi_{\vv\beta}(1,2)%
 = C_{\vv\alpha}C_{\vv\beta}(2+\sigma)^{2K},
 \end{align*}
 as $L\to\infty$. %
For the second ratio, formally applying Lemma \ref{L-lim-F} entry-wise we get %
\equh
     \label{***}
        \lim_{L\to \infty} \frac{\mM_1^{L-2K}}{{\frac{1}{2\pi}\int_{-2}^2(x+\sigma)^{L-2K}\sqrt{4-x^2} dx}}  = \vec U(2) \times \vec U(2)^T
        = \left[\begin{matrix}
    1\\2\\3\\\vdots
  \end{matrix}\right]\times \left[\begin{matrix}
    1& 2& 3&\cdots
  \end{matrix}\right].
\eque
Putting this into \eqref{*}, and using the identities $\vec a^T \vec b=\vec b^T\vec a$ and $\mM_{1/s}^T=\mM_{s}$
 we rewrite the second inner product that arises on the right hand side of \eqref{*} as
\[
\left[\begin{matrix}
    1& 2& 3&\cdots
  \end{matrix}\right] \mM_{1/s_K}\cdots \mM_{1/s_2}\mM_{1/s_1}\vec W(z_1)
    = \vec W(z_1)^T \mM_{s_1}\cdots \mM_{s_K} \left[\begin{matrix}
    1\\2\\3\\\vdots
  \end{matrix}\right].
\]
We arrive at
\begin{multline}\label{**}
  \lim_{L\to\infty}  \EE \bb{z_0^{\gamma_0\topp L} \prod_{j=1}^K t_j^{\gamma_j\topp L-\gamma_{j-1}\topp L} \prod_{j=1}^K s_j^{\gamma_{L-j}\topp L-\gamma_{L+1-j}\topp L}z_1^{\gamma_L\topp L}}
  \\
  = \pp{ \frac{1}{C_{\vv\alpha} (2+\sigma)^{K}} \Vec V_\vvalpha(z_0)^T \mM_{t_1}\mM_{t_2}\cdots \mM_{t_K} \left[\begin{matrix}
    1\\2\\3\\\vdots
  \end{matrix}\right]}
  \times \pp{ \frac{1}{C_{\vv\beta} (2+\sigma)^{K}} \vec W_\vvbeta(z_1)^T \mM_{s_1}\cdots \mM_{s_K} \left[\begin{matrix}
    1\\2\\3\\\vdots
  \end{matrix}\right]}.
\end{multline}
    The rigorous argument that avoids formal entry-wise limit \eqref{***} is based on expanding the entire expression on the right hand of \eqref{*} using the integral representation \eqref{M2L-K} for the middle matrix $ \mM_1^{L-2K}$. To apply  Lemma \ref{L-lim-F} we rewrite this expression  so that all series and sums appear under the  integral. This  use of Fubini's theorem  is justified by  convergence of the series $\sum_n \alpha_n u_n(x)$ and
    $\sum_n \beta_n u_n(x)$ which is uniform over $x\in[-2,2]$. After an application of Lemma \ref{L-lim-F}, all appearances of $u_n(x)$ in the series get replaced by $u_n(2)=n+1$.  The same the expression arises by expanding into a series the matrix product on the right hand side of \eqref{**}. We omit cumbersome details of this re-write.

In view of this decomposition of the limit \eqref{**} into two factors of the same form, we need only to identify the first factor. Write $\vec X_{j:K} = (X_j,\dots,X_K)$ for simplicity.
 To this end we note that  for any integrable function $F$ we have %
\begin{align*}
   \EE &\bb{t^{X_j-X_{j-1}}F(\vec X_{j:K})\middle|X_{j-1}=n}
   = \sum_{\epsilon\in\{0,\pm1\}}t^\epsilon \EE\bb{F(\vec X_{j:K})\middle| X_j = n+\epsilon}\Pr(X_j=n+\epsilon| X_{j-1} = n)\\
     & = t\cdot\frac{n+2}{(2+\sigma)(n+1)}\EE\bb{F(\vec X_{j:K})\middle|X_{j}=n+1}
     +1\cdot
   \frac{\sigma}{2+\sigma} \EE\bb{F(\vec X_{j:K})\middle|X_{j}=n}
  \\  &\quad  +\frac{1}{t}\cdot \frac{n}{(2+\sigma)(n+1)}\EE\bb{F(\vec X_{j:K})\middle|X_{j}=n-1}.
\end{align*}
In matrix notation, the above is the same as %
\begin{equation}\label{eq:1}
   \frac{1}{2+\sigma}\mM_{t} \left[\begin{matrix}[1.8]
    1\cdot\EE\bb{F(\vec X_{j:K})\middle|X_{j}=0}
    \\2 \cdot
    \EE\bb{F(\vec X_{j:K})\middle|X_{j}=1}
\\\vdots
  \end{matrix}\right]
  = \left[\begin{matrix}[1.8]
    1\cdot \EE\bb{t^{X_j-X_{j-1}} F(\vec X_{j:K})\middle|X_{j-1}=0}
    \\2\cdot
   \EE\bb{t^{X_j-X_{j-1}} F(\vec X_{j:K})\middle|X_{j-1}=1}
    \\\vdots
  \end{matrix}\right].
\end{equation}
Starting with $j=K$, $t=t_K$  and constant $F(X_K)=1$, we  get
\begin{multline*}
\frac{1}{C_{\vv\alpha}  (2+\sigma)^{K}} \Vec V_\vvalpha(z_0)^T \mM_{t_1}\mM_{t_2}\dots \mM_{t_K} \left[\begin{matrix}[1.8]
    1\\2
    \\\vdots
  \end{matrix}\right]
  \\=
  \frac{1}{C_{\vv\alpha}  (2+\sigma)^{K-1}} \Vec V_\vvalpha(z_0)^T \mM_{t_1}\mM_{t_2}\dots \mM_{t_{K-1}} \left[\begin{matrix}[1.8]
    1 \cdot \EE[t_k^{X_k-X_{K-1}} | X_{K-1}=0]\\
    2\cdot \EE[t_k^{X_k-X_{K-1}} | X_{K-1}=1]\\
    \vdots
  \end{matrix}\right].
  \end{multline*}
By applying iteratively \eqref{eq:1}, we arrive at
  \begin{align*}
 & \frac{1}{C_{\vv\alpha}  (2+\sigma)^{K}} \Vec V_\vvalpha(z_0)^T \mM_{t_1}\mM_{t_2}\dots \mM_{t_K} \left[\begin{matrix}[1.8]
    1\\2
    \\\vdots
  \end{matrix}\right]
  \\
  &=
    \frac{1}{C_{\vv\alpha} } \Vec V_\vvalpha(z_0)^T\left[\begin{matrix}[1.8]
    1\cdot \EE[t_1^{X_1-X_0}\dots
    t_{K-1}^{X_{K-1}-X_{K-2}} t_K^{X_K-X_{K-1}}|X_{0}=0]
    \\  2\cdot \EE[t_1^{X_1-X_0}\dots t_{K-1}^{X_{K-1}-X_{K-2}}t_K^{X_K-X_{K-1}}|X_{0}=1]
\\\vdots
  \end{matrix}\right]
  \\
  &=
  \frac{1}{C_{\vv\alpha}}\sum_{n=0}^\infty \alpha_n z_0^n (n+1)\EE\bb{\prodd r1K t_r^{X_r-X_{r-1}}
  \mmid X_{0}=n}
 = \EE\bb{ z_0^{X_0} \prod_{j=1}^K t_j^{X_j-X_{j-1}}},
\end{align*}
with
$\Pr(X_0 = n) = \frac{\alpha_n(n+1)}{C_{\vv\alpha}}$ as claimed.
Similarly, the second factor is $\EE\bb{ z_1^{Y_0} \prod_{j=1}^K s_j^{Y_j-Y_{j-1}}}$, proving \eqref{FixK}.
\end{proof}
\section{Proof of Theorem \ref{T-rho}}\label{sec:4}

As previously, we fix $K$ and use matrix representation \eqref{*} for the generating function. The analysis is a little more involved.
We let $\mM_t$ and $\vec U(x)$ be as before, but this time work with the
column vector
\[
\vec W_{\rho_1}(z) := \left[\begin{matrix}
  1 \\ \rho_1 z   \\ \vdots \\ (\rho_1 z)^n \\ \vdots
\end{matrix} \right],\quad %
\rho_1>0.
\]
Taking $s_j=1$, $z_1=1$ in \eqref{*} we  write
\begin{multline}\label{*-rho}
\EE \bb{z_0^{\gamma_0\topp L} \prod_{j=1}^K t_j^{\gamma_j\topp L-\gamma_{j-1}\topp L}\prodd k1Ks_k^{\gamma_{L-k}\topp L-\gamma_{L+1-k}\topp L }}
\\=\frac{1}{\mathfrak C_{\vvalpha,\rho_1,L}} \vec V_\vvalpha(z_0)^T \mM_{t_1}\mM_{t_2}\cdots \mM_{t_K} \mM_1^{L-2K}\mM_{1/s_K}\cdots\mM_{1/s_1}\vec W_{\rho_1}(1),
\end{multline}
where this time the normalizing constant is $\mathfrak C_{\vvalpha,\rho_1,L}:=\vec V_\vvalpha(1)^T \mM_1^{L} \vec W_{\rho_1}(1)$.

We observe that
$$\mM_t \vec W_{\rho_1}(1)=\pp{\frac{1}{t\rho_1}+\sigma+\rho_1 t} \vec W_{\rho_1}(1) - \vec R_1, $$
where vector $\vec R_1=[
  \frac{1}{t\rho_1}, 0, 0, \dots
]^T$ has only one non-zero entry.  Using this recurrently, we see that
\[
\mM_{1/s_K}\cdots \mM_{1/s_1}\vec W_{\rho_1}(1) = \prodd j1K \pp{\frac1{s_j\rho_1}+\sigma+\rho_1s_j}\vec W_{\rho_1}(1) - \vec R_K,
\]
 where vector $\vec {R}_K=[ w_0, w_1, \dots, w_{K-1},0,\dots]^T$ has only $K$ non-zero entries $w_j$. These entries do not depend on $L$, and their exact expressions are irrelevant for the limit theorem as shown a moment later. That is, writing
\equh\label{eq:r}
\EE \bb{z_0^{\gamma_0\topp L} \prod_{j=1}^K t_j^{\gamma_j\topp L-\gamma_{j-1}\topp L}\prodd k1Ks_k^{\gamma_{L-k}\topp L-\gamma_{L+1-k}\topp L }}
 = \Phi_L - r_L,
\eque
with
\begin{align*}
\Phi_L &:=\frac{1}{\mathfrak C_{\vvalpha,\rho_1,L}} \cdot \prodd j1K \pp{\frac1{s_j\rho_1}+\sigma+\rho_1s_j}\cdot \vec V_\vvalpha(z_0)^T \mM_{t_1}\mM_{t_2}\cdots \mM_{t_K} \mM_1^{L-2K}\vec W_{\rho_1}(1),\\
r_L&:=\frac{1}{\mathfrak C_{\vvalpha,\rho_1,L}}  \vec V_\vvalpha(z_0)^T \mM_{t_1}\mM_{t_2}\cdots \mM_{t_K} \mM_1^{L-2K}\vec R_K,
\end{align*}
we shall show that $\Phi_L$ has the desired limit as $L\to\infty$ and $\lim_{L\to\infty} r_L=0$.

The key step for %
$\rho_1\ge 1$
is to first extend the integral representation used in the proof of Theorem \ref{Thm-uni} in Lemma \ref{L-rho-I} below. To this end, for $\rho>0$, we introduce a probability measure
   measure $\mu_{\rho}$ of  mixed type given by
\[%
    \mu_\rho(dx):=\frac{1}{2\pi} \frac{\sqrt{4-x^2}}{1-x \rho +\rho^2 }\inddd{|x|<2}dx+\left(1-\frac{1}{\rho^2}\right)_+ \delta_{\rho+\tfrac1\rho}(dx), \quad\rho>0.
\]%
 (Here $x_+:=\max\{0,x\}$.) We also write
 \[
 \mu_0(dx) :=\frac{\sqrt{4-x^2}}{2\pi}\inddd{|x|<2}dx.
 \]
At the critical value %
$\rho=1$,
we have
\[
\mu_1(dx) = \frac1{2\pi}\sqrt{\frac{2+x}{2-x}}\inddd{|x|<2}dx.
\] \begin{lemma}\label{L-rho-I}  For $\rho_1>0, L>0$ we have
\equh\label{VMW2I}
 \mM_1^{L}\vec W_{\rho_1}(1)
     =
  \int_\RR(x+\sigma)^{L}\vec U(x) \mu_{\rho_1}(dx).
\eque
 \end{lemma}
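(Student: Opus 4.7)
The plan is to exploit the formal eigenvector identity $\mM_1\vec U(x) = (x+\sigma)\vec U(x)$, which holds entry-wise by virtue of the three-term recurrence $xu_n(x)=u_{n+1}(x)+u_{n-1}(x)$ (with $u_{-1}\equiv 0$ and $u_1(x)=x$) together with the first row $(\sigma,1,0,\dots)$ of $\mM_1$. Since $\mM_1$ is tridiagonal, each coordinate $(\mM_1\vec U(x))_m$ is a finite sum of at most three terms, and no convergence concern arises.

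From this I would establish \eqref{VMW2I} by induction on $L\ge 0$, in the coordinate form
\[
\bigl(\mM_1^L\vec W_{\rho_1}(1)\bigr)_m = \int_{\RR} u_m(x)(x+\sigma)^L\mu_{\rho_1}(dx), \qquad m\ge 0.
\]
The inductive step is immediate: tridiagonality lets the finite sum $\sum_n(\mM_1)_{m,n}\int u_n(x)(x+\sigma)^L\mu_{\rho_1}(dx)$ commute with the integral, after which the eigenvector identity converts the integrand into $u_m(x)(x+\sigma)^{L+1}$.

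The real content is the base case $L=0$, namely the moment identity
\[
\int_{\RR} u_n(x)\,\mu_{\rho_1}(dx) = \rho_1^n, \qquad n\ge 0. \qquad(\star)
\]
For $\rho_1<1$, $(\star)$ follows by inserting the Chebyshev generating function $\sum_{m\ge 0}\rho_1^m u_m(x)=(1-\rho_1 x+\rho_1^2)^{-1}$ (uniformly convergent on $[-2,2]$ since $|u_m(x)|\le m+1$) into the representation $\int u_n(x)\mu_{\rho_1}(dx) = \int u_n(x)(1-\rho_1 x+\rho_1^2)^{-1}\mu_0(dx)$ and applying orthonormality $\int u_m u_n\,\mu_0=\delta_{m,n}$. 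For $\rho_1>1$ the generating series diverges on $[-2,2]$, so I would verify $(\star)$ by direct computation: after $x=2\cos\theta$, the continuous part evaluates by Poisson-kernel integrals to $\rho_1^{-(n+2)}$ (using the standard identity $\tfrac{1}{\pi}\int_0^\pi\cos(m\theta)(1-2\rho\cos\theta+\rho^2)^{-1}d\theta = \rho^{-m}/(\rho^2-1)$ valid for $\rho>1$), while the atom at $\rho_1+\rho_1^{-1}$ contributes, via \eqref{u(rho)}, $(1-\rho_1^{-2})u_n(\rho_1+\rho_1^{-1}) = \rho_1^n-\rho_1^{-(n+2)}$; the two terms sum to $\rho_1^n$. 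The borderline $\rho_1=1$ follows by continuity in $\rho_1$, since the left side of \eqref{VMW2I} is a polynomial in $\rho_1$ and $\mu_{\rho_1}\to\mu_1$ weakly as $\rho_1\to 1$ (the atomic mass vanishes, and the continuous density converges; any small tail near $x=2$ is controlled by a change-of-variable estimate).

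The main obstacle is the regime $\rho_1\ge 1$: the generating-function argument that handles $\rho_1<1$ no longer converges, and the atom at $\rho_1+\rho_1^{-1}$ appearing in $\mu_{\rho_1}$ becomes essential. Its weight $(1-\rho_1^{-2})_+$ is calibrated exactly so that $(\star)$ extends continuously across the transition at $\rho_1=1$; confirming this requires the explicit Poisson-kernel evaluation combined with the compact formula \eqref{u(rho)} for $u_n$ at $\rho+\rho^{-1}$.
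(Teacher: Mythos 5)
Your proof is correct and takes a genuinely different, and arguably cleaner, route than the paper's. The paper works directly with the full expression $\int(x+\sigma)^L u_m(x)\,\mu_{\rho_1}(dx)$: it observes that the $m$-th entry $h_m(\rho)$ of $\mM_1^L\vec W_\rho(1)$ is a polynomial in $\rho$, matches it to the integral $\int_{-2}^2(x+\sigma)^Lu_m(x)\tfrac{\sqrt{4-x^2}}{2\pi(1-\rho x+\rho^2)}dx$ for $|\rho|<1$ via Viennot's formula and the generating function \eqref{eq:identity}, and then extends across $|\rho|=1$ by rewriting the integral as a contour integral over $|z|=1$, deforming the contour past the pole at $z=1/\rho$, and picking up a residue---which is exactly the atom of $\mu_\rho$. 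You instead factor the problem: the eigenvector identity $\mM_1\vec U(x)=(x+\sigma)\vec U(x)$ plus tridiagonality gives the inductive step for free (all sums are finite, so exchange with the integral is trivial), and the entire content collapses to the $L=0$ moment identity $\int u_n\,d\mu_{\rho_1}=\rho_1^n$. You then verify this by orthonormality and \eqref{eq:identity} for $\rho_1<1$, by an explicit Poisson-kernel evaluation combined with \eqref{u(rho)} for $\rho_1>1$, and by continuity at $\rho_1=1$. Both proofs ultimately identify the atomic weight $(1-\rho_1^{-2})_+$ as the ``missing piece'' needed to reproduce $\rho_1^n$ for $\rho_1\ge 1$---the paper produces it as a residue, you produce it by a direct real-variable computation. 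Your route avoids contour deformation and analytic continuation entirely, at the cost of invoking the classical Poisson-kernel integral. One minor point: the $\rho_1=1$ step deserves a short justification of the limit interchange; a dominated-convergence argument using $1-x\rho_1+\rho_1^2\ge 2-x$ for $\rho_1\ge 1$ (so the integrand is dominated by $\sqrt{(2+x)/(2-x)}$, which is $\mu_1$-integrable) suffices and is in the spirit of the ``change-of-variable estimate'' you gesture at.
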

 \begin{proof}
Recall that $\vec W_{\rho_1}(1) = [1,\rho_1,\rho_1^2,\rho_1^3,\dots]^T$. The equation is about two infinite-dimensional vectors. We examine the entries indexed by $m\in\Z_{\ge 0}$ (with the convention that the first entry is indexed by 0, the second by 1, etc).
Introduce
\[%
    h_m(\rho):=\sum_{n=0}^\infty\rho^n  \sum_{\vv \gamma \in \calM_{m,n}\topp {L} } w(\vv \gamma), \quad m\in\Z_{\ge 0}, \; \rho\in\CC.
\]%
Note that for every $m$ fixed, the summation over $n$ is a sum of %
a
finite number of non-zero terms (for those $n$ such that $|n-m|\le L$), and hence
$h_m$ is a polynomial in variable $\rho\in\CC$.
The $m$-indexed entry of the left-hand side of \eqref{VMW2I} is then
\[
\pp{\mM_1^{L}\vec W_{\rho_1}(1)}_m
    =\summ n0\infty\rho_1^n \sum_{\vv \gamma \in \calM_{m,n}\topp {L} } w(\vv \gamma) = h_m(\rho_1). \]
 Thus for $|\rho|<1$ from  \eqref{Viennot5+} we see that $h_m(\rho)=h_m\topp 0(\rho)$ where
 \begin{align}
   \label{f1**}
   h_m\topp 0(\rho)
&   := \sum_{n=0}^\infty \rho^n \frac{1}{2\pi}\int_{-2}^2 u_m(x)u_n(x)(x+\sigma)^{L} \sqrt{4-x^2}dx\\
&   = \int_{-2}^2(x+\sigma)^{L}u_m(x) \frac{\sqrt{4-x^2}}{2\pi(1-\rho x+\rho^2)}dx.\nonumber
 \end{align}
 Note that in the above we used the well-known identity for %
 the
 Chebyshev polynomials
 \equh\label{eq:identity}
 \sif n0 \rho^n u_n(x) = \frac1{1-\rho x+\rho^2}, \quad\mfa |\rho|\le 1.
 \eque
  Our goal is to extend the integral representation in the last formula  of \eqref{f1**} to a larger domain by explicit analytic continuation.

Assume %
$|\rho|>0$ from now on. We first re-write \eqref{f1**} as a complex integral. Substituting $x=2\cos \theta$, and then $z=e^{i\theta}$, in \eqref{f1**}  we get
\begin{align}\label{f2}
h_m\topp 0(\rho)&= \frac{1}{2\pi}\int_{-2}^{2} u_m(x) \frac{ (\sigma+x)^{L}}{ 1-x \rho +\rho^2}\sqrt{4-x^2}d x
   \\
    &  =\frac{1}{2\pi}\int_{0}^{\pi} u_m(2\cos \theta)\frac{ 4\sin^2\theta(\sigma+2\cos \theta )^{L}}{ 1-2 \rho\cos \theta+\rho^2}  d\theta\nonumber
    \\
  & =\frac{1}{4\pi}\int_{-\pi}^{\pi} u_m(2\cos \theta)\frac{ 4\sin^2\theta(\sigma+2\cos \theta )^{L}}{ 1-2 \rho\cos \theta+\rho^2}  d\theta
\nonumber   \\& =
   \pp{-\frac{1}{2}}\cdot \frac{1}{2\pi i}\oint_{|z|=1} u_m\pp{z+\frac 1z}\frac{(z^2-1)^2\left(\sigma+z+\tfrac1z\right)^{L}}{
  (1-\rho z)(z-\rho)} \frac{dz}{z^2}, %
  \nonumber
\end{align}
$ |\rho|<1.$
Next, in the last line of \eqref{f2} one can replace the contour $|z|=1$ by $|z|=r$, and this replacement is valid as long as the circle does not cross any pole of the integrand, that is,  for $r\in(1,1/|\rho|)$.
Letting the contour cross the pole at $1/\rho$ and
 adding half of the residue at $z=1/\rho$ (because of the additional factor $-1/2$ in front of the integral),
we then arrive at
\[
h_m(\rho) = h_m\topp 0(\rho) = h_{m,r}\topp 1(\rho) \qmwith |\rho|<1, r>\frac1{|\rho|},
\]
with
\begin{multline}\label{f3}
h_{m,r}\topp 1(\rho)
:=   -\frac{1}{4\pi i}\oint_{|z|=r}u_m\pp{z+\frac 1z}\frac{(z^2-1)^2\left(\sigma+z+\tfrac1z\right)^{L}}{
   (1-\rho z)(z-\rho)} \frac{dz}{z^2} \\
   +\frac12u_m\pp{\rho+\frac1\rho}\frac{\rho^2-1}{\rho^2}   \left(\rho
   +\frac{1}{\rho}+\sigma \right)^{L}.
\end{multline}
By analytic extension, for $r$ fixed $h_{m,r}\topp 1(\rho)$ can be extended to  all $\rho$ such that $1/r<|\rho|<r$.
In particular,
\[
h_m(\rho)=h_{m,r}\topp 1(\rho), \quad  \mbox{ for all $\rho$ such that } \frac1r<|\rho|<r.
\]
   Next, consider the expression \eqref{f3} for $r>1$ %
   and $\rho$ such that $|\rho|\in(1,r)$, and deform the contour of integration back to $|z|=1$. This
subtracts
    half of the residue of the integrand at $z=\rho$.
So for
\begin{multline}
   \label{f4}
h_m\topp 2(\rho):=
-    \frac{1}{4\pi i}\oint_{|z|=1}u_m\pp{z+\frac 1z}\frac{(z^2-1)^2\left(\sigma+z+\tfrac1z\right)^{L}}{(1-\rho z)(z-\rho)} \frac{dz}{z^2}
 \\+u_m\pp{\rho+\frac 1\rho} \left(1-\frac1{\rho ^2}\right) \left(\rho
   +\frac{1}{\rho }+\sigma \right)^{L},
\end{multline}
we have
$h_{m,r}\topp 1(\rho) = h_m\topp2(\rho)$ for all $\rho\in\CC$ such that $|\rho|\in(1,r)$.
 Note that $r$ can be taken arbitrarily large.
   Therefore,
   \equh\label{f5}
   h_m(\rho)=h_m\topp 2(\rho) \quad \mbox{ for all $\rho$ such that } |\rho|>1.
   \eque

Returning back to the real arguments, we see that   \eqref{f2}, \eqref{f4} and \eqref{f5} can be combined together  into a single formula which for $\rho>0,\rho\ne 1$ gives
\[
h_m(\rho)=\frac{1}{2\pi}\int_{-2}^{2}u_m(x) \frac{ (\sigma+x)^{L}}{1-x \rho +\rho^2}\sqrt{4-x^2}d x
+
  \left(1-\frac{1}{\rho^2}\right)_+ u_m\pp{\rho+\frac1\rho}  \left(\rho
 +\frac{1}{\rho }+\sigma \right)^{L} .
\]
This formula extends to $\rho=1$ by continuity.
This proves \eqref{VMW2I}.
 \end{proof}
 We also need the following.

 \begin{lemma}\label{L-lim-F-rho} If $\sigma>0$, $\rho_1\geq 1$ and $F$ is a continuous function on $[-2,\rho_1+1/\rho_1]$   then
   \begin{equation}\label{Lim-F-rho}
  \lim_{L\to\infty}\frac{\int_\RR F(x)(x+\sigma)^L \mu_{\rho_1}(dx)}{\int_{\RR} (x+\sigma)^L\mu_{\rho_1}(dx)}=F\pp{\rho_1+\frac1{\rho_1}}.
\end{equation}

\end{lemma}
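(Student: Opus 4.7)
The plan is to split into the two regimes $\rho_1>1$ and $\rho_1=1$, using the observation that in the first regime the atom at $\rho_1+1/\rho_1$ dominates the integral while in the second regime the measure has no atom and the argument reduces to a standard Laplace-type asymptotic at the right endpoint.

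First I would decompose $\mu_{\rho_1}=\mu_{\rho_1}^{\rm ac}+\mu_{\rho_1}^{\rm at}$, where $\mu_{\rho_1}^{\rm ac}$ is the part absolutely continuous on $[-2,2]$ and $\mu_{\rho_1}^{\rm at}=(1-1/\rho_1^2)_+\delta_{\rho_1+1/\rho_1}$. For $\rho_1>1$, note that by AM--GM $\rho_1+1/\rho_1>2$, so $\rho_1+1/\rho_1+\sigma>2+\sigma$. The density of $\mu_{\rho_1}^{\rm ac}$ is bounded on $[-2,2]$ and $F$ is continuous (hence bounded) there, so
\[
\left|\int_{-2}^{2}F(x)(x+\sigma)^L\,\mu_{\rho_1}^{\rm ac}(dx)\right|\le C_F\,(2+\sigma)^L,
\]
while the atomic part contributes $(1-1/\rho_1^2)F(\rho_1+1/\rho_1)(\rho_1+1/\rho_1+\sigma)^L$ to the numerator and analogously (with $F\equiv 1$) to the denominator. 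Dividing both by $(\rho_1+1/\rho_1+\sigma)^L$, the continuous contributions decay geometrically at the rate $[(2+\sigma)/(\rho_1+1/\rho_1+\sigma)]^L\to 0$, and the ratio converges to $F(\rho_1+1/\rho_1)$.

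For $\rho_1=1$ the atomic part is absent and the measure becomes $\mu_1(dx)=\tfrac{1}{2\pi}\sqrt{(2+x)/(2-x)}\,\ind_{|x|<2}dx$, with $\rho_1+1/\rho_1=2$ at the right endpoint of the support. This is a finite positive measure whose support has supremum $2$ and which puts positive mass on every interval $[2-\delta,2]$, which is exactly the setting of Lemma \ref{Lim-F-mu} (cited to prove Lemma \ref{L-lim-F}), so it yields $F(2)=F(\rho_1+1/\rho_1)$. Should the cited lemma require a bounded density (which fails for $\mu_1$ at $x=2$), the direct Laplace-type argument goes through without change: for any $\varepsilon>0$, choose $\delta>0$ so that $|F(x)-F(2)|<\varepsilon$ on $[2-\delta,2]$; bound $\int_{-2}^{2-\delta}|F(x)|(x+\sigma)^L\mu_1(dx)\le \|F\|_\infty\,C\,(2-\delta+\sigma)^L$, and compare with the denominator $\int_{2-\delta}^{2}(x+\sigma)^L\mu_1(dx)\ge c\,(2+\sigma)^L L^{-1/2}$ obtained from standard asymptotics at the integrable singularity $(2-x)^{-1/2}$; this shows the tail contribution is negligible and the integral concentrates on $[2-\delta,2]$, giving $|{\rm ratio}-F(2)|<2\varepsilon$ for $L$ large.

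The only real obstacle is the singular behavior of $\mu_1$ near $x=2$ when $\rho_1=1$, but since the singularity is integrable and $F$ is continuous at $2$, either invoking the general Lemma \ref{Lim-F-mu} or the short Laplace argument sketched above settles it. Both cases yield a common formula $F(\rho_1+1/\rho_1)$, which is the desired limit.
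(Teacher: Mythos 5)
Your proof is correct, but it takes a different route from the paper. The paper's entire proof is a one-line invocation: it applies the general Lemma \ref{Lim-F-mu} with $\mu=\mu_{\rho_1}$ and $R=\rho_1+1/\rho_1$, uniformly for all $\rho_1\ge 1$. You instead split into $\rho_1>1$ and $\rho_1=1$ and handle the atomic part explicitly in the first case.

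A few observations worth making. First, your worry about whether Lemma \ref{Lim-F-mu} ``requires a bounded density'' is unfounded: that lemma is stated for an arbitrary probability measure, and its proof uses only the Laplace-type fact $\lim_L\EE[|X|^L]^{1/L}=\|X\|_\infty$ together with boundedness of $F$; no density enters anywhere. So the blow-up of the density of $\mu_1$ at $x=2$ is not an obstruction, and your backup Laplace estimate (which is itself correct, with the $L^{-1/2}$ lower bound) is unnecessary. Second, your explicit case split for $\rho_1>1$ actually sidesteps a small imprecision in the paper: for $\rho_1>1$ the support of $\mu_{\rho_1}$ is $[-2,2]\cup\{\rho_1+1/\rho_1\}$, not the full interval $[-2,\rho_1+1/\rho_1]$ assumed in the statement of Lemma \ref{Lim-F-mu}. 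The lemma's proof is nevertheless unaffected, since it only uses that the support is contained in $[-2,R]$ with $R$ the essential supremum, but your decomposition into absolutely continuous plus atomic parts avoids the issue entirely at the modest cost of two cases. In short: the paper buys brevity and uniformity from the abstract lemma; you buy transparency (the atom visibly dominates) and a hypothesis that is literally satisfied.
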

This is Lemma \ref{Lim-F-mu} applied to $\mu=\mu_{\rho_1}$ with $R=\rho_1+1/\rho_1$.

\begin{proof}[Proof of Theorem \ref{T-rho}]
We first show that  with $\rho_1\ge1$, the end-point of a random Motzkin path is not tight.
This can be easily seen from the generating function which,
using \eqref{*}, for $z_1\in(0,1/\rho_1)$ takes the form
$$
  \EE\bb{z_1^{\gamma_L\topp L}} =\frac{\vec V_\vvalpha(1)^T \mM_1^L\vec W_{\rho_1}(z_1)}{\vec V_\vvalpha(1)^T \mM_1^L\vec W_{\rho_1}(1)}
 =
\frac{  \int_\RR(x+\sigma)^L \vec V_\vvalpha(1)^T\vec U(x)\mu_{\rho_1z_1}(dx)}{\int_\RR(x+\sigma)^L \vec V_\vvalpha(1)^T\vec U(x)\mu_{\rho_1}(dx)},
$$
where we used \eqref{VMW2I} twice.
Since  $z_1\rho_1<1$, by \eqref{Lim-F} the numerator becomes
$$\int_\RR\frac{(x+\sigma)^L}{1-x z_1\rho_1 +z_1^2\rho_1^2} \vec V_\vvalpha(1)^T\vec U(x)\mu_{0}(dx)\sim
\frac{\vec V_\vvalpha(1)^T\vec U(2)}{(1-z_1\rho_1)^2}  \int_\RR (x+\sigma)^L\mu_{0}(dx).$$
Using \eqref{Lim-F-rho} in the denominator, we get
$$
\int_\RR(x+\sigma)^L \vec V_\vvalpha(1)^T\vec U(x)\mu_{\rho_1}(dx)\sim \vec V_\vvalpha(1)^T\vec U(\rho_1+1/\rho_1) \int_\RR(x+\sigma)^L \mu_{\rho_1}(dx).
$$
Therefore, %
\[
 \EE\bb{z_1^{\gamma_L\topp L}} \sim \frac{\vec V_\vvalpha(1)^T\vec U(2)}{(1-z_1\rho_1)^2\vec V_\vvalpha(1)^T\vec U(\rho_1+1/\rho_1)} \; \frac{\int_\RR(x+\sigma)^L \mu_{0}(dx)}{\int_\RR(x+\sigma)^L \mu_{\rho_1}(dx)}\to 0 \mbox{ as $L\to\infty$}.
\]
Indeed,    using \eqref{L-lim-F-mu} with $R=\rho_1+1/\rho_1$ and $F(x)=1-\rho_1x+\rho_1^2$ we have
\begin{equation}  \label{previously}
\lim_{L\to \infty} \frac{\int_\RR(x+\sigma)^L \mu_{0}(dx)}{\int_\RR(x+\sigma)^L \mu_{\rho_1}(dx)}
=\lim_{L\to \infty} \frac{\int_\RR F(x)(x+\sigma)^L \mu_{\rho_1}(dx)}{\int_\RR(x+\sigma)^L \mu_{\rho_1}(dx)}=F(\rho_1+1/\rho_1)=0.
\end{equation}

We next prove the joint convergence.
Applying Lemma \ref{L-rho-I} to \eqref{*-rho}  and recalling \eqref{eq:r}, we have
\begin{align}
\Phi_L&= \prodd j1K \pp{\frac1{s_j\rho_1}+\sigma+\rho_1s_j}\cdot \frac{ \vec V_\vvalpha(z_0)^T \mM_{t_1}\mM_{t_2}\cdots \mM_{t_K} \mM_1^{L-2K}\vec W_{\rho_1}(1)}{
\vec V_\vvalpha(1)^T   \mM_1^{L}\vec W_{\rho_1}(1)}\nonumber
\\
&= \prodd j1K \pp{\frac1{s_j\rho_1}+\sigma+\rho_1s_j}\cdot \frac{\vec V_\vvalpha(z_0)^T \mM_{t_1}\mM_{t_2}\cdots \mM_{t_K}  \int_\RR(x+\sigma)^{L-2K}\vec U(x) \mu_{\rho_1}(dx)}{
\int_\RR(x+\sigma)^{L-2K} \mu_{\rho_1}(dx)}\label{*-rho-1}
\\
& \quad\times \frac{
\int_\RR(x+\sigma)^{L-2K} \mu_{\rho_1}(dx)}{
\int_\RR(x+\sigma)^{2K}(x+\sigma)^{L-2K}\vec V_\vvalpha(1)^T \vec U(x) \mu_{\rho_1}(dx)}.\nonumber
\end{align}
Note that function
\[%
F(x):=\vec V_\vvalpha(z_0)^T \mM_{t_1}\mM_{t_2}\cdots \mM_{t_K}  \vec U(x)
\]
 is continuous on $[-2,\rho_1+1/\rho_1]$.
Indeed, since $|u_n(x)|\leq n+1$, continuity is obvious for $x\in[-2,2]$. Next, any $x\in[1,\rho_1+1/\rho_1]$ can be written as $x=\rho+1/\rho$ with unique $\rho\in[1,\rho_1]$,  and we have for some constant $C>0$,
\[%
 \left|F\pp{\rho+\frac1\rho}\right|\leq C\sif n0 \alpha_n \rho^{-n}\abs{\sum_{k=0}^n \rho^{2k}} \leq C \sif n0 n |\rho|^n\alpha_n +C \sif n0  \frac{n\alpha_n}{\rho^n}<\infty,\]
 where we needed the assumption \eqref{eq:rho_1}. It then follows that $F$ is also continuous on $[1,\rho_1+1/\rho_1]$.
Therefore applying Lemma \ref{L-lim-F-rho} to the two fractions on the right hand side of \eqref{*-rho-1}, we get
\equh\label{*rho}
 \lim_{L\to\infty} \Phi_L
 = \prodd j1K \frac{(s_j\rho_1)\inv+\sigma+\rho_1s_j}{\rho_1+1/\rho_1+\sigma}\times \frac{ \vec V_\vvalpha(z_0)^T \mM_{t_1}\mM_{t_2}\cdots \mM_{t_K} \vec U(\rho_1+1/\rho_1)}{\mathfrak C_{\vvalpha,\rho_1} (\rho_1+1/\rho_1+\sigma)^{K}},
\eque
where
\[%
  \mathfrak C_{\vvalpha,\rho_1}:= \vec V_\vvalpha(1)^T\vec U(\rho_1) = \sif n0\alpha_nu_n\pp{\rho_1+\frac1{\rho_1}}.
\]
(In the last calculation, recall \eqref{u(rho)}.)

We first recognize
\equh\label{eq:s}
\prodd j1K \frac{(s_j\rho_1)\inv+\sigma+\rho_1s_j}{\rho_1+1/\rho_1+\sigma} = \prodd j1K \EE s_j^{\xi_j}.
\eque
\sloppy For the second fraction on the right-hand side of \eqref{*rho}, as %
before
 we
 write $\vec Z_{j:K} = (Z_j\topp {\rho_1},\dots,Z_K\topp {\rho_1})$ for simplicity, and we drop $\rho_1$ from the notation. We first write the formulas for $\rho_1>1$.
 We note that  for any integrable function $F$ we have,
in matrix notation,
\begin{multline}
   \frac{1}{\rho_1+1/\rho_1+\sigma}\mM_{t} \left[\begin{matrix}[1.8]
    (\rho_1-1/\rho_1)\cdot\EE\bb{F(\vec Z_{j:K})\middle|Z_{j}=0}
    \\(\rho_1^2-1/\rho_1^2) \cdot
    \EE\bb{F(\vec Z_{j:K})\middle|Z_{j}=1}
\\\vdots\\
  \end{matrix}\right]
  \\
  = \left[\begin{matrix}[1.8]
   (\rho_1-1/\rho_1)\cdot \EE\bb{t^{X_j-X_{j-1}} F(\vec Z_{j:K})\middle|Z_{j-1}=0}
    \\(\rho_1^2-1/\rho_1^2)\cdot
   \EE\bb{t^{Z_j-Z_{j-1}} F(\vec Z_{j:K})\middle|Z_{j-1}=1}
    \\\vdots
  \end{matrix}\right].\label{eq:2}
\end{multline}

Recall that (see \eqref{u(rho)})   \[
   \vec U\pp{\rho_1+\frac1{\rho_1}} = \frac1{\rho_1-1/\rho_1}\bb{\begin{matrix}[1.8]
   \rho_1-1/\rho_1\\
   \rho_1^2-1/\rho_1^2\\
   \vdots
   \end{matrix}}.
   \]
   We  get
\begin{align*}
& \frac{\vec V_\vvalpha(z_0)^T \mM_{t_1}\cdots \mM_{t_K}\vec U(\rho_1+1/\rho_1)}{\mathfrak C_{\vvalpha,\rho_1}(\rho_1+1/\rho_1+\sigma)^K} \\
&=
 \frac{\Vec V_\vvalpha(z_0)^T \mM_{t_1}\mM_{t_2}\dots \mM_{t_K} }{\mathfrak C_{\vvalpha,\rho_1} (\rho_1-1/\rho_1) (\rho_1+1/\rho_1+\sigma)^{K}} \left[\begin{matrix}[1.8]
    \rho_1-1/\rho_1\\\rho_1^2-1/\rho_1^2\\
    \vdots
  \end{matrix}\right]
  \\& =
  \frac{\Vec V_\vvalpha(z_0)^T \mM_{t_1}\mM_{t_2}\dots \mM_{t_{K-1}}}{\mathfrak C_{\vvalpha,\rho_1} (\rho_1-1/\rho_1)  (\rho_1+1/\rho_1+\sigma)^{K-1}}  \left[\begin{matrix}[1.8]
    (\rho_1-1/\rho_1) \cdot \EE[t_K^{Z_K-Z_{K-1}} | Z_{K-1}=0]\\
    (\rho_1^2-1/\rho_1^2)\cdot \EE[t_K^{Z_K-Z_{K-1}} | Z_{K-1}=1]
    \\
    \vdots
  \end{matrix}\right].
\end{align*}
Eventually, we have
\begin{align}
&\frac1{\mathfrak C_{\vvalpha,\rho_1}(\rho_1+1/\rho_1+\sigma)^K}\vec V_\vvalpha(z_0)^T \mM_{t_1}\cdots \mM_{t_K}\vec U(\rho_1) \label{eq:t}\\
&=   \frac{1}{\mathfrak C_{\vvalpha,\rho_1} }\sum_{n=0}^\infty \alpha_n z_0^n \frac{\rho_1^{n+1}-1/\rho_1^{n+1}}{\rho_1-1/\rho_1}\EE\bb{
\prodd r1K t_r^{Z_r-Z_{r-1}}
\mmid Z_{0}=n}\nonumber
 \\
&   = \EE\bb{ z_0^{Z_0} \prod_{j=1}^K t_j^{Z_j-Z_{j-1}}},\nonumber
\end{align}
with
\equh\label{eq:Z_0}
\proba(Z_0 = n) = \frac{\alpha_n(\rho_1^{n+1}-1/\rho_1^{n+1})}{\mathfrak C_{\vvalpha,\rho_1}(\rho_1-1/\rho_1)}, \quad n\in\ZZ_{\ge 0}.
\eque
For $\rho_1 = 1$, replacing $(\rho_1^{n+1}-1/\rho_1^{n+1})/(\rho_1-1/\rho_1)$ by $n+1$, we have the same derivation, \eqref{eq:t} and \eqref{eq:Z_0} correspondingly. In fact, \eqref{eq:2} becomes \eqref{eq:1} (with $X$ replaced by $Z$).

Combining \eqref{*rho}, \eqref{eq:s} and \eqref{eq:t}, we have proved that
\[
\lim_{L\to\infty}\Phi_L = \prodd j1K\EE s_j^{\xi_j}\EE\bb{ z_0^{Z_0} \prod_{j=1}^K t_j^{Z_j-Z_{j-1}}}.
\]
 In view of \eqref{eq:r}, it remains to show that
$\lim_{L\to\infty}r_L =0$. First, recall that
\[
r_L=\frac{1}{\mathfrak C_{\vvalpha,\rho_1,L}}  \vec V_{\vvalpha}(z_0)^T \mM_{t_1}\mM_{t_2}\cdots \mM_{t_K} \mM_1^{L-2K}\vec R_K.
\]
We have
\begin{align*}
 \vec V_\vvalpha(z_0)^T \mM_1^{L-2K}\vec {R}_K&=\sif m0 \alpha_mz_0^m\summ n0{K-1} w_{n}
\int_\RR (x+\sigma)^{L-2K}u_m(x)u_n(x) \mu_0(dx)\\
& = \summ n0{K-1} w_n\int_\RR(x+\sigma)^{L-2K}\pp{\sif m0\alpha_mz_0^mu_m(x)}u_n(x)\mu_0(dx)
\\
&\sim \pp{\sif m0(m+1)\alpha_m z_0^m}\summ n0{K-1} w_n  u_n(2)\cdot \int_\RR (x+\sigma)^{L-2K}\mu_0(dx),
\end{align*}
as $L\to\infty$, where the last step follows from
 Lemma \ref{L-lim-F}, and
\begin{align*}
\vec V_\vvalpha(1)^T \mM_1^{L}\vec W_{\rho_1}(1)& = \int_\RR (x+\sigma)^L\vec V_\vvalpha(1)^T\vec U(x)\mu_{\rho_1}(dx) \\&= \int_\RR (x+\sigma)^{L}\sif n0\alpha_nu_n(x)\mu_{\rho_1}(dx)\\
& \sim  \sif n0 \alpha_nu_n\pp{\rho_1 + \frac1{\rho_1}}\cdot \int_\RR(x+\sigma)^L\mu_{\rho_1}(dx),
\end{align*}
as $L\to\infty$,
where the last step follows from Lemma \ref{L-lim-F-rho}.
 Since $\rho_1\ge 1$, %
 as  in \eqref{previously}, $\int_\RR(x+\sigma)^{L-2K}\mu_0(dx)= o(\int_\RR(x+\sigma)^L\mu_{\rho_1}(dx))$ as $L\to\infty$.  The proof is completed.
\end{proof}
\begin{remark}\label{rem:geometric}
The choice of geometric boundary measures is crucial at two steps. The first is that it leads to the generating function for Chebyshev polynomials in \eqref{eq:identity}, which is key for Lemma \ref{L-rho-I}. Second, it is key for the derivation of \eqref{eq:t}.
\end{remark}

\appendix
\section{A technical lemma}
We need the following version of \cite[Lemma 1]{Bryc-Wesolowski-2015-asep}.
 \begin{lemma}\label{L-lim-F-mu} Suppose $\mu$ is a probability measure such that $\rm{supp} (\mu)=[-2,R]$ for some $R\geq 2$. If $\sigma>0$,   and $F$ is a continuous function on $[-2,R]$   then $\int_{\RR} (x+\sigma)^L\mu(dx)$ is non-zero for large $L$ and
   \begin{equation}\label{Lim-F-mu}
  \lim_{L\to\infty}\frac{\int_\RR F(x)(x+\sigma)^L \mu(dx)}{\int_{\RR} (x+\sigma)^L\mu(dx)}=F(R).
\end{equation}

\end{lemma}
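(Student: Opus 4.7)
The plan is a standard Laplace-method argument: the factor $(x+\sigma)^L$ concentrates its mass near the right endpoint $x=R$ of the support, where it attains its maximum absolute value $(R+\sigma)^L$, so only the behavior of $F$ near $R$ will matter in the limit.

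First I would record the key geometric inequality. Since $\sigma>0$ and $R\ge 2$, we have $|{-2}+\sigma|=|\sigma-2|\le\max(\sigma-2,2-\sigma)<R+\sigma$. Therefore, for all sufficiently small $\delta>0$,
$$M_\delta:=\max_{x\in[-2,R-\delta]}|x+\sigma|=R-\delta+\sigma<R+\sigma.$$
The need to take absolute values is the only subtlety, caused by the fact that when $\sigma<2$ the quantity $x+\sigma$ can be negative on part of $[-2,R]$; the inequality above ensures this does not spoil the tail bound.

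Next I would lower bound the denominator. Since $R\in\operatorname{supp}(\mu)$, for any $\eta\in(0,R)$ we have $\mu([R-\eta,R])>0$, so
$$I_L:=\int_\RR(x+\sigma)^L\,\mu(dx)\ge(R-\eta+\sigma)^L\,\mu([R-\eta,R])>0,$$
which gives positivity of $I_L$ for large $L$ and an exponential lower bound at rate $R-\eta+\sigma$.

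Finally, given $\varepsilon>0$, use continuity of $F$ on $[-2,R]$ to pick $\delta>0$ with $|F(x)-F(R)|\le\varepsilon$ for $x\in[R-\delta,R]$, and set $\eta:=\delta/2$. Writing
$$\frac{\int F(x)(x+\sigma)^L\,\mu(dx)}{I_L}-F(R)=\frac{\int(F(x)-F(R))(x+\sigma)^L\,\mu(dx)}{I_L},$$
I would split the numerator at $R-\delta$. The piece over $(R-\delta,R]$ is bounded in absolute value by $\varepsilon\cdot I_L$, contributing at most $\varepsilon$ after division by $I_L$. The piece over $[-2,R-\delta]$ is bounded in absolute value by $2\|F\|_\infty M_\delta^L$, and dividing by the lower bound for $I_L$ yields at most
$$\frac{2\|F\|_\infty}{\mu([R-\eta,R])}\left(\frac{R-\delta+\sigma}{R-\eta+\sigma}\right)^L\longrightarrow 0,$$
since $R-\delta+\sigma<R-\eta+\sigma$ by the choice $\eta=\delta/2$. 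Letting $L\to\infty$ and then $\varepsilon\to 0$ gives \eqref{Lim-F-mu}. The main (mild) obstacle is the sign issue handled in the first step; once one works with $|x+\sigma|$ in the tail bound, the rest reduces to a comparison of exponential rates.
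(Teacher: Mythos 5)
Your overall strategy is the same Laplace-type argument as the paper's: split the numerator at $R-\delta$, bound the far piece by $\|F\|_\infty M_\delta^L$, bound the near piece using continuity of $F$, and compare exponential rates. You do obtain the denominator lower bound a bit more directly (via $\mu([R-\eta,R])>0$) than the paper, which uses $\lim_L \big(\int_0^R(x+\sigma)^L\mu(dx)\big)^{1/L}=R+\sigma$; that difference is cosmetic.

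However, there is a genuine gap, and it is precisely the sign issue you call ``handled in the first step'': it is \emph{not} handled in the two places where it actually bites. Your lower bound
\[
I_L=\int_\RR(x+\sigma)^L\mu(dx)\;\ge\;(R-\eta+\sigma)^L\,\mu([R-\eta,R])
\]
implicitly drops the integral over $[-2,R-\eta]$, which is only legitimate when $(x+\sigma)^L\ge 0$ there. For $0<\sigma<2$ and odd $L$ the integrand is negative on $(-2,-\sigma)$, so this drop is not justified (one can cook up $\mu$ for which the inequality fails for small $L$, and as stated it is not clear even for large $L$ without a further argument). The same issue invalidates your estimate that the $(R-\delta,R]$ piece is ``bounded in absolute value by $\varepsilon\cdot I_L$'': that tacitly uses $\int_{R-\delta}^R(x+\sigma)^L\mu(dx)\le I_L$, which again requires the complementary integral to be non-negative. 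The paper avoids this by splitting the denominator at $x=0$ (where $x+\sigma\ge\sigma>0$) and proving separately that $\int_{-2}^0|x+\sigma|^L\mu(dx)\big/\int_0^R(x+\sigma)^L\mu(dx)\to 0$. Your proof can be patched in the same spirit: pick $\eta'<\eta<\delta$ and bound
\[
I_L\ \ge\ (R-\eta'+\sigma)^L\mu([R-\eta',R])\ -\ (R-\eta+\sigma)^L,
\]
whose first term dominates for large $L$, giving $I_L\ge c\,(R-\eta'+\sigma)^L$ eventually; likewise replace $|A|\le\varepsilon I_L$ by $|A|\le\varepsilon\big(I_L+(R-\delta+\sigma)^L\big)$ and observe the correction is $o(I_L)$. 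With these repairs your argument is correct, but as written the two displayed inequalities are unproven exactly when $\sigma<2$, which is a case the lemma must cover.
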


\begin{proof}
Subtracting $F(R)$ from both sides, we see that it suffices to prove \eqref{Lim-F-mu} with $F(R)=0$.

  Fix $\eps>0$. By continuity of $F$ at $x=R$, there is $\delta>0$ with $\delta\leq \sigma/2$ and $\delta\leq 2$ such that for all $ x\in[R-\delta,R]$ we have
$|F(x)|<\eps$.

We rewrite the   fraction under the limit \eqref{Lim-F-rho} as follows: %
\begin{align*}
  \frac{\int_{-2}^RF(x)(x+\sigma)^L \mu(dx)}{\int_{-2}^2 (x+\sigma)^L\mu(dx)}
 &  =\frac{\int_{-2}^{R-\delta}F(x)(x+\sigma)^L \mu(dx)+\int_{R-\delta}^RF(x)(x+\sigma)^L \mu(dx)}{\int_{0}^R (x+\sigma)^L\mu(dx)+\int_{-2}^0 (x+\sigma)^L\mu(dx)}
\\& =
\frac{\int_{-2}^{R-\delta}F(x)(x+\sigma)^L \mu(dx) }{\int_{0}^R (x+\sigma)^L\mu(dx)\pp{1+\frac{\int_{-2}^0 (x+\sigma)^L\mu(dx)}{\int_{0}^R (x+\sigma)^L\mu(dx)}}}
\\
& \quad+\frac{ \int_{R-\delta}^R F(x)(x+\sigma)^L \mu(dx)}{\int_{0}^R (x+\sigma)^L\mu(dx)\pp{1+\frac{\int_{-2}^0 (x+\sigma)^L\mu(dx)}{\int_{0}^R (x+\sigma)^L\mu(dx)}}}.
\end{align*}
Since $|F(x)|$ is bounded on $[-2,R]$ by its supremum $\|F\|_\infty$,
$$\left|\int_{-2}^{R-\delta}F(x)(x+\sigma)^L \mu(dx)\right|\leq \|F\|_\infty \int_{-2}^{R-\delta}|x+\sigma|^L \mu(dx).$$
By continuity of $F$,  $$\left|\int_{R-\delta}^RF(x)(x+\sigma)^L \mu(dx)\right| \leq \eps \int_{R-\delta}^R(x+\sigma)^L \mu(dx) \leq \eps \int_{0}^R(x+\sigma)^L \mu(dx).$$
 To end the proof it suffices to show that
\begin{equation}
  \label{lims-mu}
\frac{\int_{-2}^{R-\delta}|x+\sigma|^L \mu(dx)}{\int_{0}^R (x+\sigma)^L\mu(dx)}\to 0\quad \mbox { and }\quad
\frac{\int_{-2}^0 |x+\sigma|^L\mu(dx)}{\int_{0}^R (x+\sigma)^L\mu(dx)}\to 0 .
\end{equation}

To prove \eqref{lims-mu}, we note that for $x\in[-2,R-\delta]$ we have
$\sigma-2\leq x+\sigma\leq R+\sigma-\delta$ so
$|x+\sigma|^L\leq \pp{\max\{2-\sigma,R+\sigma-\delta\}}^L\leq (R+\sigma-\delta)^L$ (where in the last bound we used $R\geq 2$ and $\delta\leq \sigma/2$.) and similarly for $-2\leq x<0$
we have $|x+\sigma|^L\leq \pp{\max\{2-\sigma,\sigma\}}^L$. Thus the numerators in \eqref{lims-mu} are bounded from above by $ (2+\sigma-\delta)^L$ and $\pp{\max\{2-\sigma,\sigma\}}^L$ respectively.

Next, we tackle the denominator in \eqref{lims-mu}. Since $\lim_{L\to\infty}\EE\bb{|X|^L}^{1/L}=\|X\|_\infty$, we see that $\pp{\int_{0}^R (x+\sigma)^L\mu(dx)}^{1/L}$ is arbitrarily close to  $R+\sigma$.
Thus, for any $\wt \delta>0$ with $\wt \delta<R+\sigma$ and all $L$ large enough we have
$$\int_{0}^R (x+\sigma)^L\mu(dx)\geq (R+\sigma-\wt \delta)^L.$$
For the first limit in \eqref{lims-mu}, we choose $\wt \delta=\delta/2$, so that the expression is bounded by
$(R+\sigma-\delta)^L/(R+\sigma-\wt \delta)^L\to 0$.
 For the second limit, we choose
$\wt \delta=\min \{\sigma,1\}$
so that the expression is bounded by $\pp{\max\{2-\sigma,\sigma\}}^L/(R+\sigma-\wt \delta)^L\leq
\pp{\max\{2-\sigma,\sigma\}}^L/(2+\sigma-\wt \delta)^L
= \max\{\frac{2-\sigma}{2}, \frac{\sigma}{1+\sigma}\}^L\to 0$.
This proves \eqref{lims-mu},  ending the proof.
\end{proof}

\subsection*{Acknowledgments}
We thank
Jacek Weso{\l}owski for formula \eqref{u(rho)}
and references.
We are grateful to an anonymous referee for %
helpful suggestions.
 
 The first author was supported in part by Simons Foundation/SFARI Award Number: 703475, US.
 The second author was supported in part by Army Research Office, US (W911NF-20-1-0139).
 Both authors acknowledge support from the  Charles Phelps Taft Research Center at the University of Cincinnati.

\def\polhk#1{\setbox0=\hbox{#1}{\ooalign{\hidewidth
  \lower1.5ex\hbox{`}\hidewidth\crcr\unhbox0}}}

\end{document}